\def\@settitle{%
  \baselineskip14\p@\relax
    {\Large\bfseries
  \@title}}
\def\@setauthors{%
  \begingroup
  \def\thanks{\protect\thanks@warning}%
  \trivlist
  \footnotesize \@topsep45\p@\relax
  \advance\@topsep by -\baselineskip
  \item\relax
  \author@andify\authors
  \def\\{\protect\linebreak}%
  {\sc\fontsize{12}{10}\selectfont\authors}%
  \ifx\@empty\contribs
  \else
    ,\penalty-3 \space \@setcontribs
    \@closetoccontribs
  \fi
  \endtrivlist
  \endgroup
}
\def\@secnumfont{\bfseries}%
\def\section{\@startsection{section}{1}%
  \z@{.7\linespacing\@plus\linespacing}{.5\linespacing}%
  {\normalfont\bf}}
\renewcommand{\BibLabel}{%
    \Hy@raisedlink{\hyper@anchorstart{cite.\CurrentBib}\hyper@anchorend}%
    [\thebib]\hfill%
}
\newcommand{\arxiv}[1]{\tt arxiv:\hspace{0pt}{\href{http://arxiv.org/abs/#1}{#1}}}
\renewcommand{\voltext}{\IfEmptyBibField{series}{\textbf}{\textbf}}
\numberwithin{equation}{section}\swapnumbers
\newcommand{\cxymatrix}[1]{\vcenter{\xymatrix@=15pt{#1}}}
\newcommand{\kxymatrix}[1]{\vcenter{\xymatrix@=5pt{#1}}}
\newcommand{\xysubseteq}{\ar@{}[r]|{\displaystyle\subseteq}}
\newcommand{\xysubseteqdown}{\ar@{}[d]|{\rotatebox{90}{$\supseteq$}}}
\newtheorem{theorem}{Theorem}[section]
\newaliascnt{lemma}{theorem}
\newtheorem{lemma}[lemma]{Lemma}
\newaliascnt{corollary}{theorem}
\newtheorem{corollary}[corollary]{Corollary}
\newaliascnt{proposition}{theorem}
\newtheorem{proposition}[proposition]{Proposition}
\theoremstyle{definition}
\newaliascnt{definition}{theorem}
\newtheorem{definition}[definition]{Definition}
\newaliascnt{remark}{theorem}
\newtheorem{remark}[remark]{Remark}
\newtheorem*{remark*}{Remark}
\newaliascnt{example}{theorem}
\newtheorem*{example*}{Examples}
\setlist[enumerate,1]{label=\textit{(\alph*)},ref=\textit{(\alph*}),noitemsep}
\setlist[enumerate,2]{label=\textit{\roman*)},ref=\textit{\roman*}),noitemsep}
\renewcommand\[{\begin{equation}}
\renewcommand\]{\end{equation}}
\renewcommand\tilde{\widetilde}
\renewcommand\phi{\varphi}
\renewcommand\epsilon{\varepsilon}
\renewcommand\theta{\vartheta}
\renewcommand\rho{\varrho}
\newcommand\CC{{\mathbb C}}
\newcommand\FF{{\mathbb F}}
\newcommand\KK{{\mathbb K}}
\newcommand\NN{{\mathbb N}}
\newcommand\ZZ{{\mathbb Z}}
\newcommand\cA{{\mathcal A}}
\newcommand\cT{{\mathcal T}}
\newcommand\leer{\varnothing}
\newcommand\aq{{\overline{a}}}
\newcommand\bq{{\overline{b}}}
\newcommand\cq{{\overline{c}}}
\newcommand\dq{{\overline{d}}}
\newcommand\eq{{\overline{e}}}
\newcommand\Fq{{\overline{f}}}
\newcommand\gq{{\overline{g}}}
\newcommand\Rq{{\overline{R}}}
\newcommand\uq{{\overline{u}}}
\newcommand\wq{{\overline{w}}}
\newcommand\yq{{\overline{y}}}
\newcommand\into{\mathrel{\kern-3pt\xymatrix@=10pt{\ar@{>->}[r]&}\kern-5pt}}
\newcommand\inj{\ar@{>->}}
\newcommand\sur{\ar@{>>}}
\newcommand\auf{\twoheadrightarrow}
\newcommand\qur{\mathrel{\kern-10pt\raise2pt\hbox{\xymatrix@=10pt{\ar@{{ }>>}[r]&}\kern-5pt}}}
\newcommand\quot{\mathrel{\kern-5pt\raise2pt\hbox{\xymatrix@=10pt{&\ar@{{ }>>}[l]}\kern-10pt}}}
\newcommand\Auf[1]{\mathrel{\kern-3pt\xymatrix@=10pt{\ar@{>>}[r]^{#1}&}\kern-5pt}}
\newcommand\lrarrow{\mathop{\lower3pt\vbox{\baselineskip0pt\hbox{$\rightarrow$}\hbox{$\leftarrow$}}}}
\renewcommand\*{{\bf1}}
\newcommand\0{{\bf 0}}
\renewcommand\1{{\mathbb{1}}}
\newcommand\<{\langle}
\renewcommand\>{\rangle}
\def\hide#1{\hbox to 10pt{\hss$#1$\hss}}
\def\|#1|{\operatorname{#1}}
\newcommand\sO{{\mathrm s}}
\newcommand\qO{{\mathrm q}}
\newcommand{\Times}{\gen@tens{\times}}
\newcommand{\gen@tens}[1]{%
  \@ifnextchar_{\gen@@tens{#1}}{\mathbin{#1}}%
}
\def\gen@@tens#1_#2{%
  \mathpalette\gen@@@tens{{#1}{#2}}%
}
\newcommand\gen@@@tens[2]{\mathbin{\gen@@@@tens#1#2}}
\newcommand\gen@@@@tens[3]{%
  \ifx#1\displaystyle
    \mathop{#2}\limits_{#3}%
  \else
    {#2}_{#3}%
  \fi
}
\let\Amalg=\amalg
\def\amalg{\mathbin{\scriptstyle\Amalg}}
\def\9#1/{{\color{red}#1}}
\newif\ifcharacterstart
\title[]{The subobject decomposition in enveloping tensor\\categories}
\author[Friedrich Knop]{Friedrich Knop\hfill\ \\\small\rm Friedrich-Alexander-Universität Erlangen-Nürnberg}
\dedicatory{To the memory of Tonny A. Springer}
\subjclass[2010]{18D10, 20F29, 08A62, 08B05} \keywords{Pseudo-abelian categories; symmetric categories; regular categories; Mal’cev categories; Möbius functions}
\begin{document}

\begin{abstract}
  To every regular category $\mathcal{A}$ equipped with a degree
  function $\delta$ one can attach a pseudo-abelian tensor category
  $\mathcal{T}(\mathcal{A},\delta)$. We show that the generating
  objects of $\mathcal{T}$ decompose canonically as a direct sum. In
  this paper we calculate morphisms, compositions of morphisms and
  tensor products of the summands. As a special case we
  recover the original construction of Deligne's category
  $\operatorname{\mathrm{Rep}} S_t$.
\end{abstract}
\maketitle         

\section{Introduction}

Deligne constructed in \cite{Deligne} the non-Tannakian tensor
category $\|Rep|S_t$ which can be interpreted as the category of
representations of the symmetric group on $t$ letters where $t$ does
not have to be a natural number. His construction is based on the fact
that the space of morphisms between certain objects of $\|Rep|S_n$,
$n\in\NN$, stabilize when $n$ goes to infinity.

In \cite{TERC}, another construction of $\|Rep|S_t$ was given which is
not based on this stabilization property. Instead, $\|Rep|S_t$ was
obtained as a twisted version of the category of relations attached to
$\cA=\mathsf{Set}^{\rm op}$, where $\mathsf{Set}$ denoted the category
of finite sets. This construction has the advantage that it readily
generalizes to much more general setting. In \cite{TERC} a
pseudo-abelian category tensor category $\cT(\cA,\delta)$ has been
constructed from any regular category $\cA$ which is equipped with a
degree function $\delta$. Moreover, precise conditions on $\cA$ and
$\delta$ were established for $\cT(\cA,\delta)$ to be a semisimple
(hence abelian) category. Taking for example for $\cA$ the category of
finite dimensional $\FF_q$-vector spaces, this leads to a category
$\|Rep|\|Gl|(V)$ where $V$ is an $\FF_q$-vector space with any number
$t\in\CC$ elements.

Both constructions, Deligne's and the authors, have in common that the
tensor category is built up from generating objects and a description
of the morphisms between them. The set of generating objects is not
the same, though. Even though both sets are parameterized by natural
numbers, Deligne's objects have much smaller morphism spaces between
them.

In this paper we elucidate how these two sets of generating objects
are related to each other. More generally, we define the analogues of
Deligne's generators in the more general setting of an arbitrary
regular category $\cA$ and show how $\cT(\cA,\delta)$ can be
constructed from them.

More specifically, each object $x$ of $\cA$ yields a generating object
$[x]$ of $\cT(\cA,\delta)$. The morphisms are linear combinations of
all relations. It turns out that $[x]$ contains a specific direct
summand $[x]^*$ and that $[x]$ is the direct sum of all $[y]^*$ where
$y$ runs through all subobjects of $x$.

We show that these $[x]^*$ are precisely Deligne's generators. For
this, we exhibit bases for the morphism spaces
$\|Hom|_\cT([x]^*,[y]^*)$ and calculate how composition is expressed
in terms of these bases. Moreover, we decompose the tensor product
$[x]^*\otimes[y]^*$ and calculate the tensor product of morphisms. The
formulas obtained turn out to specialize exactly to formulas used by
Deligne to define $\|Rep|S_t$. This shows conclusively that our
category $\cT(\mathsf{Set}^{\rm op},\delta)$ is equivalent to
$\|Rep|S_t$. More precisely, we show:

\begin{theorem}\label{thm:main}
  Let $\cA$ be a subobject finite, regular category, let $\delta$ be a
  degree function on $\cA$ and $\cT:=\cT(\cA,\delta)$.  For all
  objects $x$ and $y$ of $\cA$ let $R(x,y)$ be the set of subobjects
  of $x\times y$ such that both projections $r\to x$ and $r\to y$ are
  surjective. Then for every object $x$ of $\cA$ there is a direct
  summand $[x]^*$ of $[x]$ such that
  \[
    \bigoplus_{y\subseteq x}[y]^*\overset\sim\to[x]
  \]
  for all $x$. Moreover, the objects $[x]^*$ have the following
  properties:

  \begin{enumerate}

  \item\label{it:*def2} The objects $[x]^*$ are natural with respect
    to isomorphism and generate $\cT$ as a pseudo-abelian category.

  \item For all $x$ and $y$, the morphism space
    $\|Hom|_\cT([x]^*,[y]^*)$ has two natural (with respect to
    isomorphisms) bases $(r)$ and $\{r\}$ where $r$ runs through
    $R(x,y)$.

  \item The composition of morphisms is computed with formulas
    \eqref{eq:uuq} and \eqref{eq:sr2} below for the $(r)$- and the
    $\{r\}$-basis, respectively.

  \item Each tensor product $[x]^*\otimes[y]^*$ is naturally a direct
    sum of the objects $[r]^*$ where $r$ runs through $R(x,y)$. The
    unital, associativity, and symmetry constraints are induced by
    those of the direct product $x\times y$.

  \item\label{it:*def3} The tensor product of morphisms is computed
    with formulas \eqref{eq:tensorss} and \eqref{eq:tensorss2} below
    for the $(r)$- and the $\{r\}$-basis, respectively.

  \end{enumerate}
  
  In particular, $\cT$ is uniquely determined by the properties
  \ref{it:*def2} through \ref{it:*def3} (with either of the morphisms
  $(r)$ or $\{r\}$).
  
\end{theorem}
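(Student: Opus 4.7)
The plan is to produce the decomposition via an explicit system of orthogonal idempotents in $\operatorname{End}_\cT([x])$ indexed by the subobject poset of $x$, with the two bases $(r)$ and $\{r\}$ arising as dual descriptions of the same Möbius inversion. Recall that $[x]$ has, by construction of $\cT(\cA,\delta)$, morphism spaces $\|Hom|_\cT([x],[y])$ spanned by arbitrary subobjects $r\subseteq x\times y$, with composition $r\circ s$ defined via fibered product together with the degree function $\delta$. Subobject finiteness guarantees that all the sums below are finite and that Möbius inversion on the subobject lattice is available.

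First, I would build the summand $[x]^*$. For each pair $y\subseteq x$ consider the graph of the inclusion $y\hookrightarrow x$ as a relation in $\|Hom|_\cT([y],[x])$; together with its transpose it defines a canonical morphism $\iota_y^x\colon[y]\to[x]$ and a retract-like companion $\pi_y^x\colon[x]\to[y]$. The identity relation on $[x]$ can be written as $\sum_{y\subseteq x}\iota_y^x\pi_y^x$ up to the Möbius coefficients of the subobject poset; inverting this triangular system defines, for each $x$, an idempotent $e_x\in\|End|_\cT([x])$ whose image I take as $[x]^*$. Orthogonality of $\{e_y\iota_y^x\pi_y^x\mid y\subseteq x\}$ then yields the desired decomposition $\bigoplus_{y\subseteq x}[y]^*\overset\sim\to[x]$, and item \ref{it:*def2} follows because the $[x]$ already generate $\cT$ as a pseudo-abelian category and each $[x]$ is a direct sum of $[y]^*$'s.

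Next I would compute $\|Hom|_\cT([x]^*,[y]^*)$ by conjugating the full basis $\{r\mid r\subseteq x\times y\}$ of $\|Hom|_\cT([x],[y])$ by the idempotents $e_x$ and $e_y$. The two natural bases arise as follows: the family $(r):=e_y\circ r\circ e_x$ with $r\in R(x,y)$ is the image of the tautological relation basis, while $\{r\}$ is its Möbius dual, defined so that the change-of-basis matrix over the poset structure of $R(x,y)$ is triangular with unit diagonal. Vanishing of $e_y\circ r\circ e_x$ for $r$ whose projection to $x$ or $y$ is not surjective cuts the basis down to exactly $R(x,y)$. Composition in either basis is then a direct calculation: composing two relations gives a fibered product followed by pushforward, and re-expressing the result in the $(r)$- or $\{r\}$-basis via Möbius inversion yields the advertised formulas \eqref{eq:uuq} and \eqref{eq:sr2}.

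For the tensor product, I would start from $[x]\otimes[y]\cong[x\times y]$ (a structural property of $\cT(\cA,\delta)$) and apply the decomposition of $[x\times y]$ into summands $[r]^*$ indexed by subobjects $r\subseteq x\times y$; distributing the decompositions of $[x]$ and $[y]$ and matching summands shows that $[x]^*\otimes[y]^*$ contributes precisely those $r$ that project surjectively onto both factors, i.e.\ $r\in R(x,y)$, giving the desired natural decomposition together with compatibility of the unit, associativity, and symmetry constraints. The tensor product formulas on morphisms follow from the same bookkeeping, rewriting the tensor of two relations in terms of the appropriate basis. Finally, uniqueness in the last sentence is formal: the listed data fully determine the full subcategory on the $[x]^*$, and the pseudo-abelian hull is itself determined up to equivalence. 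The main technical obstacle I anticipate is managing the two interlocking Möbius inversions, one on the subobject lattice of $x$ to define $[x]^*$ and one on $R(x,y)$ to pass between the $(r)$- and $\{r\}$-bases, so that the composition and tensor product formulas come out in the clean form required to match Deligne's recipe when $\cA=\mathsf{Set}^{\rm op}$.
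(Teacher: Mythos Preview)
Your outline is essentially the paper's argument: set $p_y:=\iota_y^x\pi_y^x$, observe that these commute with $p_up_v=p_{u\cap v}$, apply M\"obius inversion on the subobject lattice to obtain orthogonal idempotents $p_y^*$ summing to the identity, put $[x]^*:=p_x^*[x]$, and then compute Hom-spaces, composition, and tensor products by conjugating with these idempotents.

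Two corrections. First, your labeling of the bases is reversed relative to the paper: the conjugated relation $e_y\circ r\circ e_x=p_y^*\langle r\rangle p_x^*$ is what the paper calls $\{r\}$, whereas $(r):=[b]\,p_r^*\,[a]^\vee$ uses the idempotent of $r$ \emph{as an object of $\cA$}; it is $(r)$ that is obtained from $\{r\}$ by M\"obius inversion on $R(x,y)$ (see \eqref{eq:rsrs}), not the other way around. Since the theorem pins formula \eqref{eq:uuq} to $(r)$ and \eqref{eq:sr2} to $\{r\}$, this matters. Second, the $(r)$-composition \eqref{eq:uuq} is not a routine bookkeeping step: the crucial input is the identity $[e]\,p_x^*\,[e]^\vee=\omega_e\,p_y^*$ for surjective $e$ (the paper's \eqref{eq:sur2}), with $\omega_e$ the M\"obius-weighted degree sum of \eqref{eq:defomega}; this is what produces the coefficients $\omega_{u\auf\uq}$, and you should isolate it as a lemma rather than fold it into ``direct calculation.''
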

      
Summarizing: In the construction of $\cT(\cA,\delta)$ with the $[x]$
as basic objects, a basis of $\|Hom|_\cT([x],[y])$ is parameterized by
the set of all subobjects of $x\times y$. The formulas for composition
and tensor product of morphisms are very simple. In contrast, a basis
of the morphism space $\|Hom|_\cT([x]^*,[y]^*)$ for the $[x]^*$ is
parameterized by subobjects of $x\times y$ which map surjectively onto
each factor. There are much fewer of them. On the other hand the
formulas for composition and tensor product are much more involved.

Actually, the comparison of \cite{Deligne} and \cite{TERC} is a bit
more complicated. For $\|Rep|S_t$ the morphism space between $[x]$ and
$[y]$ is spanned by all partitions of the disjoint union $X\dot\cup
Y$. For $[x]^*$ and $[y]^*$ only those partitions are considered where
both $X$ and $Y$ meet every part in at most one element. It follows
that a partition of this kind corresponds to a \emph{gluing} of $X$
and $Y$, i.e., subsets $X_0\subseteq X$ and $Y_0\subseteq Y$ together
with a bijective map $X_0\overset\sim\to Y_0$. This alternate
description of the morphisms by way of gluings works for any so-called
Mal'tsev category. Now Deligne gives just a multiplication formula for
the $(r)$-morphisms in terms of gluings which we generalize in the
last section. Formula \eqref{eq:sr2} for the $\{r\}$-morphisms
seems to be new.

\begin{remark}
  For the benefit of the reader we indicate in detail how the
  construction in \cite{Deligne} is a special case of ours. The base
  category $\cA$ is chosen to be the opposite category
  $\mathsf{Set}^{\rm op}$ of the category of finite sets. Then we need
  first of all that $\cA$ is regular, exact, subobject finite, and, in
  the last section, Mal'cev.

  That $\mathsf{Set}^{\rm op}$ has these
  properties is most easily seen by observing that
  $\mathsf{Set}^{\rm op}$ is equivalent to the category
  $\mathsf{Bool}$ of finite Boolean algebras. In fact, the functor
  $C\mapsto\|Maps|(C,\FF_2)$ is an equivalence. Now $\mathsf{Bool}$ is
  the set of finite models of an algebraic theory (unital rings with
  all elements idempotent). Thus it is regular and exact by
  \cite{Borceux}*{Thm.~3.5.4}. It is clearly subobject finite. The
  Mal'cev property follows from the fact that the algebraic theory
  contains a group operation (namely addition, see
  \cite{Borceux2}*{Thm.~2.2.2, Ex.~2.2.5}).

  The degree function on $\cA$ is $\delta(C)=t^{|C|}$ where $t$ is a
  free variable and $|C|$ is the order of the finite set $C$. An
  epimorphism $e:x\auf x'$ in $\cA$ corresponds to an injective map
  $j:A'\hookrightarrow A$ of finite sets. Then our $\omega_e$ equals
  $P_A$ in the notation of \cite{Deligne}*{(2.10.2)}. This follows
  from \cite{TERC}*{Lemma\ 8.4 and 8.7 Example\ 1}
  
  The objects $[x]^*$ correspond to the objects $[U]$ in
  \cite{Deligne}*{2.12}. Our objects $[x]$ do not occur in
  \cite{Deligne} while, on the other side, Deligne's objects
  $\{\lambda\}$ will be considered in a forthcoming paper. Our
  morphisms $(r)$ and $\{r\}$ correspond to $(C)$ and $\{C\}$ in
  \cite{Deligne}*{2.12}, respectively.

  Finally the precise
  correspondence between the various Lemmas, Propositions and Theorems is (with \cite{Deligne} on top):

  \begin{center}
    \begin{tabular}{ccccccccccc}
  $\{r\}$&$(s)(r)$&$\{s\}\{r\}$&$\{v\}'\{u\}'$&$[x]^*\otimes[y]^*$&assoc./com.&$(r)\otimes(r')$&$\{r\}\otimes\{r'\}$
  \\
  \hline
  2.2&2.10&---&2.11&2.4&2.7&2.8&---
  \\
      \eqref{eq:rsrs}&\eqref{eq:uuq}&\eqref{eq:sr2}&\eqref{eq:Malcevprod}&\eqref{eq:tensorstar}&\eqref{eq:tensorass}/\eqref{eq:tensorcom}&\eqref{eq:tensorss}&\eqref{eq:tensorss2}
    \end{tabular}
\end{center}

Note that a more elaborate exposition of our construction in the case
$\cA=\mathsf{Set}^{\rm op}$ is contained in the paper \cite{CO} by
Comes and Ostrik.

\end{remark}

\section{The category \boldmath$\cT(\cA,\delta)$}

The monoidal category $\cT(\cA,\delta)$ is build from two ingredients,
a category $\cA$ and a degree function $\delta$.

The category $\cA$ has to be rich enough such that the usual
relational calculus makes sense. This means roughly that it has
images, pull-backs, and that images commute with pull-backs.

Let's be more precise. For any fixed object $x$ of $\cA$ the class of
monomorphisms $m:y\into x$ carries a transitive relation by
stipulating $m\le m'$ if $m$ factors through $m'$. Two monomorphisms
$m$, $m'$ are \emph{equivalent} if both $m\le m'$ and $m\ge
m'$. Equivalence classes of monomorphisms are called \emph{subobjects}
of $x$. The collection of all subobjects will be denoted by
$\sO(x)$. It is now partially ordered.

The \emph{image} $\|im|(f)$ of a morphism $f:x\to y$ is the minimal
subobject $m$ of $y$ through which $f$ factors (if one exists). The
image is also denoted by $f(x)$. If $f(x)=y$ then $f$ is by
definition an \emph{extremal epimorphism} (denoted $f:x\auf y$). More
generally, if the image of $f:x\to y$ is $m:u\into y$ then $f=me$
where $e:x\auf u$ is an extremal epimorphism. Moreover, this
epi-mono-factorization is unique upto unique isomorphism.

\begin{remark}
  The monomorphism/extremal epimorphism language is a bit too clumsy
  for our needs. Henceforth, these morphisms will be called injective
  and surjective, respectively. This is justified by the fact that in
  most examples injective/surjective morphisms are just
  injective/surjective maps. This applies e.g. to the categories of
  sets, groups or vector spaces with one notatble exception namely the
  category attached to Deligne's original category
  $\|Rep|(S_t)$. There $\cA=\mathsf{Set}^{\rm op}$ is the opposite
  category of the category of finite sets. Then injective morphisms
  correspond to surjective maps and vice verso. 
  Similarly, we write $u\subseteq v$ instead of
  $u\le v$ for subobjects $u,v\in\sO(x)$ and $u\cap v$ for the fiber
  product (intersection) $u\times_xv$.
\end{remark}

Now we formulate our main requirements on $\cA$.

\begin{definition}
  A category $\cA$ will be called \emph{regular} if the following
  conditions hold:

  \begin{itemize}[noitemsep]

  \item[\textbf{R0}] For every object the collection of its subobjects
    is a set.

  \item[\textbf{R1}] Every morphism has an image.

  \item[\textbf{R2}] There is a terminal object (denoted by $\*$).

  \item[\textbf{R3}] For every commutative diagram
    \[
      \cxymatrix{u\ar[r]\ar[d]&y\ar[d]\\x\ar[r]&z}
    \]
    the pullback $x\times_zy$ exists.

  \item[\textbf{R4}] The pull-back of a surjective morphism by an
    arbitrary morphism exists and is surjective.

  \end{itemize}

\end{definition}

\begin{remark}
  \emph{(1)} Condition \textbf{R4} can be rephrased as: Let $f:x\to z$
  and $g:y\to z$ be morphisms. Then
  \[
    f(x)\times_zy=f(x\times_zy)
  \]
  in the sense that one side exists if and only if the other does and
  in that case they are canonically isomorphic.

  \emph{(2)} The possibility for a pull-back not to exist is mainly
  included to accommodate the category of affine spaces over a field,
  since considering the empty set as an affine space creates more
  problems than it solves. For example the dimension formula does not
  hold for empty intersections. Nevertheless, one can form a new
  category $\cA^\leer$ by adjoining an initial object $\leer$ to $\cA$
  which is then finitely complete. More precisely, all finite limits
  which do not exist in $\cA$ exist in $\cA^\leer$ and are equal to
  $\leer$.

  \emph{(3)} The axioms of regular categories are usually formulated
  in terms of regular epimorphisms. It is not difficult to show that
  all surjective morphisms are regular (even effective). Thus the
  definitions are equivalent.
\end{remark}

As already mentioned, the definition of a regular category is tailored
to allow a calculus of relations. Recall that a \emph{relation}
between two objects $x$ and $y$ is a subobject $r$ of $x\times y$ or,
equivalently a jointly injective pair of morphisms $r\to x,y$. If $s$
is another relation between $y$ and $z$ then their \emph{product}
$r\circ s$ is the image of $p_x\times p_z:r\times_ys\to x\times z$
with the stipulation that $r\circ s=\leer$ if $r\times_ys$ does not
exist.

Axiom \textbf{R3} is now instrumental to show that the product of
relations is associative. This way one can define a new category
$\|Rel|\cA$ with the same objects as $\cA$ and with relations as
morphisms. The identity morphism in $\|Rel|\cA$ is given by the
\emph{diagonal} $\Delta x\subseteq x\times x$.

Given a relation $r$ between $x$ and $y$ then the \emph{adjoint
  relation} $r^\vee\subseteq y\times x$ is obtained by switching
factors. This way, $\|Rel|\cA$ becomes a symmetric monoidal category
with the tensor product being the Cartesian product $x\times y$ and
the terminal object $\*$ being the unital object. It is even
\emph{rigid} with every object being self-dual and
\[
  \cxymatrix{&x\ar[dl]\inj[dr]^\Delta\\\*&&\hbox
    to0pt{\hss$x\times
      x$\hss}} \quad\text{and}\quad
  \cxymatrix{&x\ar[dr]\inj[dl]_\Delta\\\hbox
    to0pt{\hss$x\times x$\hss}&&\*}
\]
being the evaluation and coevaluation morphism, respectively.

Our ultimate goal is to produce tensor categories, i.e. rigid
symmetric monoidal categories which are also abelian. It is only the
last property which is lacking for $\|Rel|\cA$. A first step in the
right direction is to make it additive. This is very easy by considering
linear combinations of relations as morphisms. But it turns out that
the ensuing category is quite degenerate. It turns out that a twist of
the construction makes things much better.

\begin{definition}
  Let $\cA$ be a regular category and $\KK$ any base ring. A
  \emph{degree function on $\cA$} is a map which assigns every
  surjective morphism $e:x\auf y$ an element $\delta(e)\in \KK$ with
  the following properties:
  \begin{itemize}[noitemsep]
  \item[{\bf D1}] $\delta(\|id|_x)=1$ for all $x$.
  \item[{\bf D2}] $\delta(\eq)=\delta(e)$ whenever $\eq$ is a
    pull-back of $e$.
  \item[{\bf D3}] $\delta(e\,\eq)=\delta(e)\,\delta(\eq)$ whenever $e$
    can be composed with $\eq$.
  \end{itemize}
\end{definition}

\begin{remark}
  \emph{(1)} If $\cA$ has no initial element we extend the degree
  function to $\cA^\leer$ by defining $\delta(\leer\to\leer):=1$. But
  beware: This does not define a degree function on $\cA^\leer$, since
  \textbf{D2} is violated for pull-backs along $\leer\to x$.

  \emph{(2)} It is sometimes convenient to extend $\delta$ to all
  morphisms $f:x\to y$ by defining
  $\delta(f):=\delta(x\auf\|im|f)$. Then \textbf{D2} holds
  unconditionally for all morphisms while \textbf{D3} is valid
  whenever $e$ is injective or $\eq$ is surjective.

  \emph{(2)} If $e$ and $\eq$ are isomorphic in an obvious sense then
  $\delta(\eq)=\delta(e)$. This is a consequence of \textbf{D2}. In
  particular $\delta(e)=1$ for all isomorphisms $e$.

  \emph{(3)} Every regular category $\cA$ has at least two degree
  function. First, the one with $\delta(e)=1$ for all $e$. Secondly,
  the one with $\delta(e)=0$ for all non-isomorphisms $e$.

  \emph{(4)} For most $\cA$ there aren't any other degree
  functions. This holds for example for the category $\mathsf{Set}$ of
  finite sets.

  \emph{(5)} As a positive example let $\cA$ be an abelian category
  such that every object $x$ has finite length $\ell(x)$. Let
  $t\in\KK$ be arbitrary. Then $\delta(e):=t^{\ell(\|ker|e)}$ is a
  non-trivial degree function.

  \emph{(6)} Another example is the category $\mathsf{Set}^{\rm
    op}$. Then $e:x\auf y$ is represented by an injective map $Y\to X$
  and $\delta(e)=t^{\#X-\#Y}$ is the degree function which gives rise
  to Deligne's category $\|Rep|S_t$.
\end{remark}

Now we can define an intermediate category $\cT^0(\cA,\delta)$.

\begin{definition} Let $\cA$ be a regular category, $\KK$ a field, and
  $\delta$ a $\KK$-valued degree function on $\cA$. Then the category
  $\cT^0=\cT^0(\cA,\delta)$ is defined as follows:

  \begin{enumerate}
  \item The \emph{objects} of $\cT^0$ are those of $\cA$. If an object
    $x$ of $\cA$ is regarded as an object of $\cT^0$ then we will
    denote it by $[x]$.

  \item A \emph{morphism} from $[x]$ to $[y]$ is a formal
    $\KK$-linear combination of relations between $x$ and $y$. The
    morphism corresponding to a relation $r$ is denoted by $\<r\>$. We
    set $\<\leer\>:=0$.

  \item The \emph{composition} of $\cT^0$-morphisms is defined as
    follows: let $r\into x\times y$ and $s\into y\times z$ be
    relations. Then
    \[\label{eq:compos1}
      \<s\>\<r\>:=
      \begin{cases}
        \delta(r\times_ys\auf s\circ r)\ \<s\circ r\>,&\text{if
          $r\times_ys$ exists;}\\
        0,&\text{otherwise.}
      \end{cases}
    \]

  \item The \emph{tensor product} of $[x]$ and $[y]$ is
    $[x]\otimes[y]=[x\times y]$. Let $\<r\>:[x]\to[x']$ and
    $\<s\>:[y]\to[y']$ be two morphisms given by relations $r$ and
    $s$. Then $\<r\>\otimes\<s\>:[x]\otimes[y]\to[x']\otimes[y']$ is
    given by the relation
    \[\label{eq:tensorrs}
      r\times s\into(x\times x')\times(y\times
      y')\overset\sim\to(x\times y)\times(x'\times y').
    \]
    The unit object is $\1:=[\*]$ (with $\*$ being the terminal object
    of $\cA$).

  \end{enumerate}
\end{definition}

Finally, we generate also direct sums and direct summands:

\begin{definition}

  The category $\cT(\cA,\delta)$ is the pseudo-abelian closure of
  $\cT^0(\cA,\delta)$, i.e., its objects are of the form $pX$ where
  $X=\bigoplus_\nu[x_\nu]$ is a formal finite direct sum of objects of
  $\cT^0$ with
  \[
    \|Hom|_\cT(X,Y):=\bigoplus_{\mu,\nu}\|Hom|_{\cT^0}([x_\mu],[y_\nu])
  \]
  and $p\in\|End|_\cT(X)$ is idempotent with morphisms
  \[
    \|Hom|_\cT(pX,qY):=q\|Hom|_\cT(X,Y)p.
  \]

\end{definition}

In \cite{TERC} it was shown that the category $\cT(\cA,\delta)$
inherits from $\cA$ the structure of a rigid symmetric monoidal
category with tensor product $[x]\otimes[y]=[x\times y]$ and dual
$(p[x])^\vee=p^\vee[x]$. In particular the associativity of the
product of morphisms is ensured by the axioms of a degree
function. Additionally, $\cT(\cA,\delta)$ is $\KK$-linear and
additive.

\section{An alternate construction of \boldmath$\cT(\cA,\delta)$}

Let $\cA$ be a regular category and $\delta$ a $\KK$-valued degree
function on $\cA$. In this section we give a description of
$\cT(\cA,\delta)$ or rather $\cT^0(\cA,\delta)$ in terms of generators
and relations while bypassing the category of relations.

Every morphism $f:x\to y$ gives rise to two morphisms in $\cT^0$ namely
\[
  [f]:=\<\Gamma_f\>:[x]\to[y] \text{ and
  }[f]^\vee=\<\Gamma_f^\vee\>:[y]\to[x]
\]
where $\Gamma_f\into x\times y$ is the graph of $f$. Moreover for
every relation
\[\label{eq:xarby}
  \cxymatrix{&r\ar[dl]_a\ar[dr]^b\\x&&y}
\]
we have
\[\label{eq:xry}
  \<r\>=[r\to y][r\to x]^\vee.
\]
So the objects $[x]$ and the morphisms $[f]$, $[f]^\vee$ generate
$\cT(\cA,\delta)$ as a pseudo-abelian category. It is easy to verify
that they satisfy the following relations:

\begin{itemize}[noitemsep]
\item[{\bf Rel1}]\label{eq:fgfg}
  $[f][g]=[fg]$ and $[g]^\vee[f]^\vee=[fg]^\vee$\\
  for all composable morphisms $f$, $g$,

\item[\textbf{Rel2}]\label{eq:baba}
  $[a']^\vee[b']=[b][a]^\vee$\\
  for each Cartesian diagram
  $\kxymatrix{&&r\ar[ld]_a\ar[rd]^b\\
    &x\ar[rd]_{b'}&&y\ar[ld]^{a'}&\\&&z&&}$
  
\item[\textbf{Rel3}]\label{eq:ff}
  $[f][f]^\vee=\delta(f)\|id|_{[y]}$\\
  for each surjective morphism $f:x\to y$.

\end{itemize}

Conversely:

\begin{theorem}\label{thm:con1}
  Let $\KK$ be a ring, let $\cA$ be a regular category, and let
  $\delta$ be a $\KK$-valued degree function on $\cA$. Then
  $\cT(\cA,\delta)$ is the free pseudo-abelian category which is
  generated by the objects $[x]$ with $x$ an object of $\cA$ and
  morphisms $[f]:[x]\to[y]$, $[f]^\vee:[y]\to[x]$ for each morphism
  $f:x\to y$ which are subject to the relations {\bf Rel1}, {\bf Rel2}
  and {\bf Rel3}.
\end{theorem}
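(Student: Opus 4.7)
The strategy is to identify the two directions of a universal property. The morphisms $[f]$, $[f]^\vee$ already live in $\cT(\cA,\delta)$ and one verifies by direct inspection (using axioms \textbf{D1}–\textbf{D3} and the definition of composition \eqref{eq:compos1}) that relations \textbf{Rel1}, \textbf{Rel2}, \textbf{Rel3} hold there. This is the routine direction.

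For the universal direction, let $\cC$ be any pseudo-abelian $\KK$-linear category equipped with objects $F[x]$ and morphisms $F[f]$, $F[f]^\vee$ satisfying \textbf{Rel1}, \textbf{Rel2}, \textbf{Rel3}. I would define a functor $\Phi\colon\cT(\cA,\delta)\to\cC$ by $\Phi([x])=F[x]$ and, for a relation $r\into x\times y$ with projections $a\colon r\to x$ and $b\colon r\to y$ as in \eqref{eq:xarby}, by
\[
\Phi(\<r\>):=F[b]\,F[a]^\vee,
\]
extended $\KK$-linearly, then extended to the pseudo-abelian closure in the obvious way (an idempotent in the generating category lifts to an idempotent in $\cC$, hence splits there). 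Formula \eqref{eq:xry} shows that $\Phi$ is forced on the generators $[f]$ and $[f]^\vee$, so uniqueness of $\Phi$ is automatic provided it exists.

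The real content is well-definedness, i.e.\ that $\Phi$ respects the composition rule \eqref{eq:compos1}. Given relations $r\into x\times y$, $s\into y\times z$ with projections $a\colon r\to x$, $b\colon r\to y$, $c\colon s\to y$, $d\colon s\to z$, I would compute
\[
\Phi(\<s\>)\Phi(\<r\>)=F[d]F[c]^\vee F[b]F[a]^\vee.
\]
The middle pair $F[c]^\vee F[b]$ is the place where \textbf{Rel2} is applied to the pullback square $r\times_y s\to r,s\to y$ (when that pullback exists; otherwise the base-change axiom forces the composite to be $0$, consistent with \eqref{eq:compos1}). This rewrites the product as $F[d']F[b']^\vee$ where $b'\colon r\times_y s\to r$, $d'\colon r\times_y s\to s$, and then \textbf{Rel1} collapses it to $F[q]F[p]^\vee$ where $p,q$ are the two projections $r\times_y s\to x,z$. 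Finally, factoring $(p,q)\colon r\times_y s\to x\times z$ through its image $s\circ r$ as $r\times_y s\overset{e}{\auf} s\circ r\to x\times z$ and applying \textbf{Rel1} followed by \textbf{Rel3} to $F[e]F[e]^\vee=\delta(e)\,\|id|$ produces
\[
\Phi(\<s\>)\Phi(\<r\>)=\delta(e)\,\Phi(\<s\circ r\>),
\]
which is precisely \eqref{eq:compos1}. One must also check that $\Phi$ is independent of the chosen representative of each subobject (immediate from \textbf{Rel1} applied to an isomorphism), and that $\Phi$ preserves identities (\textbf{Rel3} applied to the identity and \textbf{D1}).

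The step I expect to require the most care is the pullback rewriting above, because one has to treat the case where the fibre product $r\times_y s$ does not exist in $\cA$: here \textbf{Rel2} must be interpreted so that both sides involve morphisms into an object that is not actually present, forcing the composite to vanish in $\cC$. The cleanest way to bypass this is to pass once and for all to $\cA^\leer$ from the remark and observe that relations involving $\leer$ are sent to zero in $\cC$ because $F[\leer]$ is a zero object (which follows from \textbf{Rel3} with $\delta=0$ on $\leer\auf\*$, provided one sets things up so that $\leer$ is handled consistently). Once well-definedness is in place, the functor $\Phi$ is a morphism of pseudo-abelian categories by construction, and this gives the universal property characterising $\cT(\cA,\delta)$.
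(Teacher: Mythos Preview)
Your approach is correct and is the natural dual of the paper's. The paper works inside the free category $\tilde\cT$ on the given generators and relations and shows, by a normal-form argument, that every word in the generators reduces via \textbf{Rel1}--\textbf{Rel3} to a linear combination of morphisms $[b][a]^\vee$ with $(a,b)$ jointly monic; since these map to the basis $\<r\>$ of $\|Hom|_\cT$, the canonical functor $\tilde\cT\to\cT$ is bijective on $\|Hom|$-modules. You instead verify the universal property of $\cT$ directly, defining $\Phi(\<r\>):=F[b]\,F[a]^\vee$ in an arbitrary target $\cC$ and checking functoriality. The core computation is identical in both proofs (pull back via \textbf{Rel2}, compose via \textbf{Rel1}, image-factor and collapse via \textbf{Rel3}); only the logical packaging differs. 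Your version is slightly more economical in that it never manipulates arbitrary words, only products of two basis elements, whereas the paper's version makes the spanning set in $\tilde\cT$ explicit and along the way derives $[\|id|_x]=\|id|_{[x]}$. Your caution about the case where $r\times_y s$ fails to exist is well placed: neither your argument nor the paper's reduction step handles it cleanly from \textbf{Rel1}--\textbf{Rel3} as stated, and your proposed fix via $\cA^\leer$ would require adjoining an extra relation to force $F[\leer]$ to be a zero object in $\cC$.
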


\begin{proof}
  Let $\tilde\cT$ be the free category defined by the generators and
  relations above and let $F:\tilde\cT\to\cT(\cA,\delta)$ be the
  obvious functor. It suffices to show that $F$ is an isomorphism on
  $\|Hom|_{\tilde\cT}([x],[y])$ for all objects $x,y$ of $\cA$. Let $F(x,y)$
  be this map. Then $F(x,y)$ is surjective because of
  \eqref{eq:xry}. Thus, it suffices to show that
  $\|Hom|_{\tilde\cT}([x],[y])$ is linearly generated by all morphisms of
  the form \eqref{eq:xry}.

  For this, we first show that
  \[\label{eq:id}
    [\|id|_x]=[\|id|_x]^\vee=\|id|_{[x]}.
  \]
  It follows from {\bf Rel1} that both $[\|id|_x]$ and
  $[\|id|_x]^\vee$ are idempotents. Because of $\delta(\|id|_x)=1$ and
  {\bf Rel3} we have $[\|id|_x][\|id|_x]^\vee=\|id|_{[x]}$. Hence
  $[\|id|_x]=[\|id|_x][\|id|_x][\|id|_x]^\vee=[\|id|_x][\|id|_x]^\vee
  =\|id|_{[x]}$.

  The space of morphisms is spanned by products
  $\phi=\phi_1\ldots\phi_n$ where each $\phi_i$ is either of type
  $[f]$ or $[f]^\vee$. Because of {\bf Rel1} we may assume that no two
  adjacent morphisms are both of type $[f]$ or both of type
  $[f]^\vee$. Because of {\bf Rel2} we may also assume that there is
  no morphism of type $[f]^\vee$ followed by a morphism of type
  $[f]$. Thus, we have the possibilities $\phi=\|id|_{[x]}$ (case
  $n=0$), $\phi=[f]$, $\phi=[f]^\vee$, or $\phi=[g][f]^\vee$. Because
  of \eqref{eq:id}, we may assume that there are two morphisms
  $f:u\to x$, $g:u\to y$ such that $\phi=[g][f]^\vee$. Let $r$ be the
  image of $f\times g:u\to x\times y$ and $h:u\auf r$ the
  corresponding epimorphism. Consider the morphisms $\Fq:r\to x$ and
  $\gq:r\to y$. Then
  $[g][f]^\vee=[\gq h][\Fq
  h]^\vee=[\gq][h][h]^\vee[\Fq]^\vee=\delta(h)[\gq][\Fq]^\vee$. Thus
  $[g][f]^\vee$ is a multiple of a morphism of type \eqref{eq:xry}.
\end{proof}

\section{The subobject decomposition}

Let $\cA$ be a subobject finite regular category and $\delta$ a
$\KK$-valued degree function on $\cA$.

Of particular interest are the morphisms $[i]:[y]\to[x]$ where
$i:y\to x$ is injective. In this case $y\times_xy=y$ and therefore
\[\label{eq:i}
  [i]^\vee[i]=\|id|_{[y]}.
\]
It follows that $[i]$ is a split monomorphism and $[i]^\vee$ a split
epimorphism. Let $u$ be the image of $i$, i.e., the subobject
represented by $i$. Then \eqref{eq:i} implies that
$p_u:=[i][i]^\vee\in\|End|_\cT([x])$ is an idempotent for which one
easily checks that it depends only on $u$. Thus, every $u\in\sO(x)$
gives rise to a direct summand $p_u[x]$ of $[x]$ which is via $i$
isomorphic to $[y]$. Clearly $p_u=\<r\>$ where $r$ is the relation
\[
  \cxymatrix{&y\inj[dl]_i\inj[dr]^i\\x&&x}
\]
This implies that the collection of all idempotents $p_u$,
$u\in\sO(x)$, is linearly independent. One also easily checks that
\[
  p_up_v=p_{u\cap v}\text{ for all $u,v\in\sO(x)$}.
\]
In particular, the idempotents $p_u$ commute with each other. Thus,
they can be expressed by primitive idempotents $p_u^*$.

More precisely, let $\KK[\sO(x)]$ be the Möbius algebra of $\sO(x)$,
i.e., the $K$-vector space with basis $\sO(x)$ and product induced by
intersection. Then the formula
\[\label{eq:pp*}
  p_v=\sum_{u\subseteq y}p_u^*\text{ for all }v\in\sO(x).
\]
defines recursively a new basis $(p_y^*)_{y\in\sO(x)}$ satisfying
\[\label{eq:p*p*}
  p_u^*p_v^*=\delta_{u,v}p_u^*\quad\text{and}\quad
  p_u^*p_v=\begin{cases}p_u^*&\text{if }u\subseteq
    v\\0&\text{otherwise}\end{cases}.
\]
(see e.g. \cite{Stanley}*{Thm.~3.9.1}). Conversely, one has
\[\label{eq:p*p}
  p_v^*:=\sum_{u\subseteq v}\mu(u,v)p_u
\]
where $\mu(u,v)\in\ZZ$ is the Möbius function of the lattice
$\sO(x)$. Plugging in \eqref{eq:pp*} into \eqref{eq:p*p} and vice
verso one obtains the relations
\[\label{eq:summu}
  \sum_{u\subseteq v\subseteq w}\mu(v,w)=\delta_{u,w}=
  \sum_{u\subseteq v\subseteq w}\mu(u,v).
\]
Thus each subobject $u\subseteq x$ gives rise to a direct summand
$p_u^*[x]$ of $[x]$. We abbreviate $[x]^*:=p_x^*[x]$. Then
$p_u^*[x]=p_u^*p_u[x]\cong p_u^*[u]=[u]^*$. Thus we get the
\emph{subobject decomposition of $[x]$}
\[\label{eq:subobjdec}
  [x]=\sum_{u\subseteq x}p_u^*[x]=\sum_{u\subseteq x}[u]^*.
\]
Observe that because the $p_u$ are linearly independent, each summand
$[u]^*$ is non-zero.

Next, we describe the functorial properties of the subobject
decomposition.

\begin{lemma}Let $f:x\to y$ be a morphism and $z\subseteq y$. Then
  \[\label{eq:sur0}
    p_z[f]=[f]p_{f^{-1}(z)}.
  \]
\end{lemma}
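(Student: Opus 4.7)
The lemma is essentially the Beck--Chevalley type identity \textbf{Rel2} applied to the pullback square defining $f^{-1}(z)$, together with the functoriality relation \textbf{Rel1}. The plan is to unpack both sides in terms of the generating morphisms $[g]$ and $[g]^\vee$, and then chain the three structural relations mechanically; no new idea beyond the presentation of $\cT$ by generators and relations is needed.

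More concretely, I would first choose an injection $j\colon z\into y$ representing the subobject, so that by the very definition $p_z=[j][j]^\vee$. Assuming the pullback exists, let $i\colon f^{-1}(z)\into x$ and $f'\colon f^{-1}(z)\to z$ be the two projections of the Cartesian square $f^{-1}(z)=x\times_yz$; then $i$ is again injective (pullbacks of injections are injections), the subobject it represents is $f^{-1}(z)$, so $p_{f^{-1}(z)}=[i][i]^\vee$, and $jf'=fi$. Applying \textbf{Rel2} to this Cartesian square yields $[j]^\vee[f]=[f'][i]^\vee$, and combining this with \textbf{Rel1} and the equality $jf'=fi$ produces the chain
$$
p_z[f]=[j][j]^\vee[f]=[j][f'][i]^\vee=[jf'][i]^\vee=[fi][i]^\vee=[f][i][i]^\vee=[f]\,p_{f^{-1}(z)},
$$
which is the asserted identity.

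The only subtle point concerns existence: since $j$ is injective rather than surjective, axioms \textbf{R3}--\textbf{R4} do not a priori force $x\times_yz$ to exist in $\cA$. If it does not, I would pass to $\cA^\leer$, in which case both sides of \eqref{eq:sur0} vanish: $p_{f^{-1}(z)}=0$ by convention, while the composition rule \eqref{eq:compos1} forces $[j]^\vee[f]=0$. Thus there is no real obstacle; the argument reduces to a routine application of \textbf{Rel1} and \textbf{Rel2} to a single pullback square.
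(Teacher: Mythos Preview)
Your argument is correct and is essentially the paper's own proof: the paper writes out the two span diagrams computing $p_z[f]$ and $[f]p_{f^{-1}(z)}$ as relations and observes they coincide because $\iota\Fq=f\overline\iota$, which is exactly the content of your chain $[j][j]^\vee[f]=[j][f'][i]^\vee=[f][i][i]^\vee$ via \textbf{Rel2} and \textbf{Rel1}. The paper does not discuss the non-existence case either, so your brief remark on it is a harmless addition rather than a deviation.
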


\begin{proof}
  Let $\iota:z\into y$ be the inclusion. Then the two diagrams
  \[
    \cxymatrix{
      &&\hide{f^{-1}(z)}\inj[dl]^{\overline\iota}\ar[dr]_\Fq\\
      &x\ar@{=}[dl]\ar[dr]^f&&z\inj[dl]_\iota\inj[dr]_\iota\\
      x&&y&&y } \quad \cxymatrix{
      &&\hide{f^{-1}(z)}\ar@{=}[dl]\ar[dr]^{\overline\iota}\\
      &\hide{f^{-1}(z)}\ar[dl]^{\overline\iota}\ar[dr]_{\overline\iota}&&x\ar@{=}[dl]\ar[dr]^f\\
      x&&x&&y }
  \]
  represent the left and right hand side of \eqref{eq:sur0}. We
  conclude with $\iota\Fq=f\overline\iota$.
\end{proof}

From this we derive

\begin{lemma} Let $f:x\to y$ be a morphism and $u\subseteq x$,
  $z\subseteq y$ subobjects. Then
  \[\label{eq:sur3}
    p_z^*[f]p_u^*=\begin{cases} [f]p_u^*&\text{if
      }f(u)=z\\0&\text{otherwise.}
    \end{cases}
    \quad\text{and}\quad p_u^*[f]^\vee p_z^*=\begin{cases}
      p_u^*[f]^\vee&\text{if }f(u)=z\\0&\text{otherwise.}
    \end{cases}
  \]
  \[\label{eq:sur1}
    p_z^*[f]=\sum_{u\subseteq x\atop f(u)=z}[f]p_u^*
    \quad\text{and}\quad [f]^\vee p_z^*=\sum_{u\subseteq x\atop
      f(u)=z} p_u^*[f]^\vee.
  \]
\end{lemma}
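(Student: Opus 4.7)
The plan is to derive both \eqref{eq:sur3} and \eqref{eq:sur1} from the previous lemma \eqref{eq:sur0} by Möbius inversion, then obtain the $[f]^\vee$ versions by duality. I will prove \eqref{eq:sur1} first, and deduce \eqref{eq:sur3} from it using the orthogonality \eqref{eq:p*p*}.

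First I would use \eqref{eq:p*p} to expand
\[
 p_z^*[f]=\sum_{w\subseteq z}\mu(w,z)\,p_w[f].
\]
Then \eqref{eq:sur0} from the preceding lemma rewrites each $p_w[f]$ as $[f]p_{f^{-1}(w)}$, and \eqref{eq:pp*} expands $p_{f^{-1}(w)}=\sum_{u\subseteq f^{-1}(w)}p_u^*=\sum_{f(u)\subseteq w}p_u^*$. Swapping the order of summation gives
\[
 p_z^*[f]=\sum_{u\subseteq x}[f]p_u^*\sum_{f(u)\subseteq w\subseteq z}\mu(w,z),
\]
and the inner sum equals $\delta_{f(u),z}$ by \eqref{eq:summu}. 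This yields the first formula of \eqref{eq:sur1}.

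For \eqref{eq:sur3}, I multiply the identity just proved on the right by $p_u^*$ and use the orthogonality relation $p_{u'}^*p_u^*=\delta_{u',u}p_u^*$ from \eqref{eq:p*p*}: only the term with $u'=u$ survives, and it survives exactly when $f(u)=z$. For the two formulas involving $[f]^\vee$, I would apply the duality functor $(-)^\vee$ to the statements already proved. The key observation is that the idempotents $p_u$ (being of the form $[i][i]^\vee$ with $i$ injective) are self-dual, hence so are their linear combinations $p_u^*$. Dualizing $p_z^*[f]=\sum_{f(u)=z}[f]p_u^*$ therefore yields $[f]^\vee p_z^*=\sum_{f(u)=z}p_u^*[f]^\vee$ directly, and similarly for \eqref{eq:sur3}.

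There is no real obstacle; the whole argument is bookkeeping with the two Möbius inversion formulas \eqref{eq:pp*} and \eqref{eq:p*p} together with the telescoping identity \eqref{eq:summu}. The only point where one must be slightly careful is that the inverse image $f^{-1}(w)$ is used in the form given by \eqref{eq:sur0}, and one must remember that the condition $u\subseteq f^{-1}(w)$ is equivalent to $f(u)\subseteq w$, which is what allows the interchange of summation.
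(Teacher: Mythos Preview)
Your proof is correct and uses the same ingredients as the paper's proof: the previous lemma \eqref{eq:sur0}, the Möbius inversion formulas \eqref{eq:pp*}--\eqref{eq:p*p}, the telescoping identity \eqref{eq:summu}, and duality for the $[f]^\vee$ statements. The only difference is the order: the paper first multiplies \eqref{eq:sur0} by $p_u^*$ to obtain \eqref{eq:sur3} directly, and then sums over $u$ (using $\sum_u p_u^*=\|id|$) to deduce \eqref{eq:sur1}, whereas you prove \eqref{eq:sur1} first and then recover \eqref{eq:sur3} by orthogonality---the two routes are formally equivalent and involve the same computation.
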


\begin{proof}
  First, observe that the right hand formulas are just the adjoints of
  the ones on the left hand side.

  For $u\subseteq x$ multiply both sides of \eqref{eq:sur0} by
  $p_u^*$. Then
  \[\label{eq:pfp*}
    p_z[f]p_u^*\overset{\eqref{eq:p*p*}}=
    \begin{cases}[f]p_u^*&\text{if }f(u)\subseteq
      z\\0&\text{otherwise.}
    \end{cases}
  \]
  From this we get
  \[
    p_z^*[f]p_u^*\overset{\eqref{eq:p*p}}= \sum_{v\subseteq
      z}\mu(v,z)p_v[f]p_u^*\overset{\eqref{eq:pfp*}}=
    \sum_{f(u)\subseteq v\subseteq
      z}\mu(v,z)[f]p_u^*\overset{\eqref{eq:summu}}=
    \begin{cases}[f]p_u^*&\text{if }f(u)=z,\\0&\text{otherwise.}
    \end{cases}
  \]
  Then \eqref{eq:sur1} follows:
  \[
    p_z^*[f]\overset{\eqref{eq:pp*}}=\sum_{u\subseteq x}p_z^*[f]p_u^*
    \overset{\eqref{eq:sur3}}= \sum_{u\subseteq x\atop
      f(u)=z}[f]p_u^*.\qedhere
  \]
\end{proof}

\begin{corollary}
  Let $f:x\to y$ be a morphism and $u\subseteq x$, $v\subseteq y$
  subobjects. Then $[f]$ maps $[u]^*$ into $[f(u)]^*$ and $[f]^\vee$
  maps $[v]^*$ into $\bigoplus_{u\subseteq x\atop f(u)=v}[u]^*$ .
\end{corollary}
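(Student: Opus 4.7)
The plan is to deduce the corollary directly from the preceding lemma, essentially as a bookkeeping translation between the two languages ``idempotents $p^*_u, p^*_v$ acting on $[x]$ and $[y]$'' and ``the summands $[u]^*, [v]^*$''. The key identification to keep in view throughout is $[u]^*\cong p_u^*[x]$, which holds because $[u]^*=p_u^*[u]$ and $[u]\cong p_u[x]$ (the isomorphism induced by $[i]$ for $i:u\into x$), so that $p_u^*[x]=p_u^*p_u[x]\cong p_u^*[u]=[u]^*$.

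For the first half, to say that $[f]$ maps $[u]^*$ into $[f(u)]^*$ means, after the above identification, that $[f]p_u^*:[x]\to[y]$ factors through the summand $p_{f(u)}^*[y]$. Concretely this requires
\[
  p_{f(u)}^*[f]p_u^*=[f]p_u^*\quad\text{and}\quad p_z^*[f]p_u^*=0\ \text{for every } z\in\sO(y)\ \text{with}\ z\ne f(u).
\]
Both of these are precisely what the left-hand formula in \eqref{eq:sur3} asserts, applied with $z=f(u)$ and $z\ne f(u)$ respectively. No further work is needed.

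For the second half, $[f]^\vee$ restricted to the summand $[v]^*=p_v^*[y]$ is just the composite $[f]^\vee p_v^*:[y]\to[x]$. The left-hand formula in \eqref{eq:sur1} rewrites this as
\[
  [f]^\vee p_v^*=\sum_{u\subseteq x,\ f(u)=v} p_u^*[f]^\vee,
\]
where the $u$-th term takes values in $p_u^*[x]=[u]^*$. Hence the image of $[v]^*$ under $[f]^\vee$ is contained in $\sum_{f(u)=v} p_u^*[x]$, and by the orthogonality of the $p_u^*$ recorded in \eqref{eq:p*p*} this sum is in fact the direct sum $\bigoplus_{f(u)=v}[u]^*$, as claimed.

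There is really no obstacle here, and the argument is essentially two sentences once one accepts the identification $[u]^*\cong p_u^*[x]$. The only mildly delicate point is keeping careful track of which side of the morphism each idempotent sits on, but the two halves of the previous lemma are exactly adjoints of one another, which makes the symmetry between $[f]$ and $[f]^\vee$ transparent.
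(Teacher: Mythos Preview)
Your proof is correct and matches the paper's approach (which is even terser: it records only $(1-p_{f(u)}^*)[f]p_u^*=0$ from \eqref{eq:sur3} and leaves the adjoint statement implicit). One small slip: the identity $[f]^\vee p_v^*=\sum_{f(u)=v}p_u^*[f]^\vee$ that you invoke is the \emph{right}-hand formula in \eqref{eq:sur1}, not the left-hand one.
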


\begin{proof} By \eqref{eq:sur3} we have $(1-p_{f(u)}^*)[f]p_u^*=0$
  which means that $[u]^*$ is mapped into $[f(u)]^*$.
\end{proof}

For the next formula, we need a certain numerical invariant attached
to a surjective morphism. Since $\cA$ is subobject finite the set
$\sO(x)$ is a finite poset (even a lattice, possibly without a
minimum). Let $\mu(y,z)$ be its Möbius function and let $e:x\auf y$ be
surjective. Then we define
\[\label{eq:defomega}
  \omega_e:=\sum_{u\in\sO(x)\atop e(u)=y}\mu(u,x)\delta (u\auf y)\in
  \KK.
\]

\begin{lemma} Let $e:x\auf y$ be a surjective morphism. Then
  \[\label{eq:sur2}
    [e]\,p_x^*\,[e]^\vee=\omega_e\,p_y^*.
  \]
\end{lemma}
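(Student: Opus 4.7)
The plan is to compute $[e]p_x^*[e]^\vee$ directly by expanding $p_x^*$ in the $p_u$-basis, and then use the machinery of \eqref{eq:sur1}--\eqref{eq:p*p*} to collapse the answer to a multiple of $p_y^*$.

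First I would establish a ``localization'' observation: since $e$ is surjective, $e(x)=y$, so by \eqref{eq:sur3} we get $p_y^*[e]p_x^*=[e]p_x^*$ and, adjointly, $p_x^*[e]^\vee p_y^*=p_x^*[e]^\vee$. Using the idempotency of $p_x^*$ this yields
\[
  [e]p_x^*[e]^\vee = p_y^*\,[e]p_x^*[e]^\vee\,p_y^*,
\]
so the endomorphism already lives in $p_y^*\,\|End|_\cT([y])\,p_y^*$.

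Next I would compute $[e]p_u[e]^\vee$ for each subobject $u\subseteq x$. Writing $p_u=[i_u][i_u]^\vee$ with $i_u\colon u\into x$ the inclusion, \textbf{Rel1} gives $[e]p_u[e]^\vee=[ei_u][ei_u]^\vee$. Factoring $ei_u\colon u\to y$ as $u\auf e(u)\into y$ and invoking \textbf{Rel1} and \textbf{Rel3} turns this into $\delta(u\auf e(u))\,p_{e(u)}$. Combining this with the Möbius expansion $p_x^*=\sum_{u\subseteq x}\mu(u,x)p_u$ and grouping terms by $w=e(u)$ gives
\[
  [e]p_x^*[e]^\vee \;=\; \sum_{w\subseteq y} c_w\,p_w,\qquad c_w:=\sum_{u\subseteq x,\, e(u)=w}\mu(u,x)\,\delta(u\auf w),
\]
where by \eqref{eq:defomega} we note that $c_y=\omega_e$.

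Finally I sandwich with $p_y^*$. By \eqref{eq:p*p*}, $p_y^* p_w = p_y^*$ when $y\subseteq w$ and $0$ otherwise; since $w\subseteq y$ in our sum, only the term $w=y$ survives, giving
\[
  [e]p_x^*[e]^\vee = p_y^*\Bigl(\sum_w c_w p_w\Bigr)p_y^* = c_y\,p_y^* = \omega_e\,p_y^*,
\]
as desired. I don't expect any real obstacle: the proof is essentially assembling \textbf{Rel1}/\textbf{Rel3}, the factorization of $ei_u$, and the Möbius algebra identities from \eqref{eq:pp*}--\eqref{eq:summu}; the only point requiring a brief check is the collapse in the last step, which is immediate from \eqref{eq:p*p*}.
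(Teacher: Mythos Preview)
Your proof is correct and follows essentially the same route as the paper: expand $p_x^*$ via Möbius inversion, compute $[e]p_u[e]^\vee=\delta(u\auf e(u))\,p_{e(u)}$ using the epi--mono factorization and \textbf{Rel3}, and then use the left multiplication by $p_y^*$ to kill all terms with $e(u)\ne y$. The only cosmetic difference is that the paper applies the $p_y^*$-filter (via \eqref{eq:sur1}) before computing $[e]p_u[e]^\vee$, whereas you compute all terms first and filter at the end with \eqref{eq:p*p*}; the content is identical.
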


\begin{proof}
  Let $\iota:u\into x$ be a subobject. Then \eqref{eq:sur1} implies
  that $p_y^*[e]p_u=0$ unless $e(u)=y$. Using
  $p_u=[\iota][\iota]^\vee$ we get
  \[\begin{split}
      [e]\,p_x^*\,[e]^\vee&\overset{\eqref{eq:sur3}}=
      p_y^*\,[e]\,p_x^*\,[e]^\vee\overset{\eqref{eq:p*p}}=
      \sum_{u\subseteq x}
      \mu(u,x)\,p_y^*\,[e]\,p_u\,[e]^\vee=\\
      &=\sum_{u\subseteq x\atop
        e(u)=y}\mu(u,x)\,p_y^*\,[e\iota][e\iota]^\vee \overset{{\bf
          Rel3}}=\sum_{u\subseteq x\atop e(u)=y}\mu(u,x)\,\delta(u\auf
      y)\,p_y^*\overset{\eqref{eq:defomega}}=
      \omega_e\,p_y^*.\hfill\qedhere
    \end{split}
  \]
\end{proof}

\section{The subobject construction of \boldmath$\cT(\cA,\delta)$}

Let $\cA$ be a subobject finite regular category and $\delta$ a
$\KK$-valued degree function. In this section we show how to build up
$\cT$ from its generators $[x]^*$ thereby proving \cref{thm:main}.

We start with the description of morphisms:

\begin{definition} Let $R(x,y)\subseteq\sO(x\times y)$ denote the set
  of relations $r\subseteq x\times y$ such that both projections
  $a:r\to x$ and $b:r\to y$ are surjective. For every $r\in R(x,y)$
  one can define two morphisms $[x]^*\to[y]^*$ namely
  \[\label{eq:defrr}
    \begin{array}{lll}
      (r)&:=&p_y^*[b]\,p_r^*\,[a]^\vee p_x^*\overset{\eqref{eq:sur3}}=
              [b]\,p_r^*\,[a]^\vee\\
      \{r\}&:=&p_y^*\,\<r\>\, p_x^*=p_y^*\,[b]\,[a]^\vee\,
                p_x^*
    \end{array}
  \]
\end{definition}

These two sets of morphisms are related as follows:

\begin{lemma}
  \[\label{eq:rsrs}
    \{r\}=\sum_{s\in R(x,y)\atop s\subseteq r}(s) \quad\text{and}\quad
    (r)=\sum_{s\in R(x,y)\atop s\subseteq r}\mu(s,r)\{s\}.
  \]
  Here $\mu$ is the Möbius function of $\sO(x\times y)$.
\end{lemma}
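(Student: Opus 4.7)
The plan is to prove the first formula by a resolution of the identity, and the second by Möbius inversion. Since $\|id|_{[r]} = p_r = \sum_{s \subseteq r} p_s^*$ by \eqref{eq:pp*} (with $s$ ranging over $\sO(r)$ and each $p_s^*$ viewed in $\|End|_\cT([r])$), inserting this between $[b]$ and $[a]^\vee$ in the definition $\{r\} = p_y^*\,[b]\,[a]^\vee\,p_x^*$ gives
\[
\{r\} = \sum_{s \in \sO(r)} p_y^*\,[b]\,p_s^*\,[a]^\vee\,p_x^*.
\]
By \eqref{eq:sur3}, the summand corresponding to $s$ vanishes unless $b(s) = y$ and $a(s) = x$, that is, unless $s \in R(x,y)$; for the surviving $s$, the same identity lets one drop the outer $p_y^*$ and $p_x^*$, leaving $[b]\,p_s^*\,[a]^\vee$.

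Next I would show that for $s \in R(x,y)$ with $s \subseteq r$, this expression equals $(s)$. Let $\iota : s \into r$ be the inclusion, so that $[b\iota] = [b_s]$ and $[a\iota] = [a_s]$ for the (surjective) projections $a_s, b_s$ from $s$. For every $t \subseteq s$, the idempotent $p_t \in \|End|_\cT([r])$ factors as $[\iota]\,p_t\,[\iota]^\vee$ where the inner $p_t$ lies in $\|End|_\cT([s])$; this follows from $p_t = [\iota_t][\iota_t]^\vee$ and {\bf Rel1}. Because the Möbius value $\mu(t,s)$ is intrinsic to the interval $[t,s]$, the expansion \eqref{eq:p*p} of $p_s^*$ has identical coefficients in both rings, so $p_s^* \in \|End|_\cT([r])$ equals $[\iota]\,p_s^*\,[\iota]^\vee$ with $p_s^*$ on the right in $\|End|_\cT([s])$. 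Therefore
\[
[b]\,p_s^*\,[a]^\vee = [b_s]\,p_s^*\,[a_s]^\vee = (s),
\]
the last equality being the simplification in \eqref{eq:defrr}. Summing over $s$ yields the first identity.

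For the second identity I would Möbius-invert in the lattice $\sO(x\times y)$. The essential point is that $R(x,y)$ is an up-set: if $s \in R(x,y)$ and $s \subseteq t$, then the surjection $s \to x$ factors through $t \to x$, forcing the latter to be surjective, and the same argument applies to the projection onto $y$. Consequently, whenever $t \subseteq r$ with $t,r \in R(x,y)$, the entire interval $[t,r]$ in $\sO(x\times y)$ lies in $R(x,y)$, so substituting the first formula into the right-hand side of the second and interchanging sums reduces the claim to the standard identity \eqref{eq:summu} applied in $\sO(x\times y)$.

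The main obstacle is the identification $p_s^* = [\iota]\,p_s^*\,[\iota]^\vee$ in step two: one has to be careful that the primitive idempotent attached to the same subobject $s$, but viewed through two different ambient objects, transforms correctly under the inclusion. The clean way is to verify it term-by-term via \eqref{eq:p*p}, exploiting the fact that $\mu(t,s)$ depends only on the interval $[t,s]$ and not on the surrounding poset.
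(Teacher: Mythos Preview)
Your argument is correct and follows the paper's proof essentially verbatim: insert the decomposition $\|id|_{[r]}=\sum_{s\subseteq r}p_s^*$ between $[b]$ and $[a]^\vee$, kill the non-surjective summands via \eqref{eq:sur3}, and then Möbius-invert. The two points you flag as potential obstacles---the compatibility $p_s^*=[\iota]\,p_s^*\,[\iota]^\vee$ under change of ambient object, and the fact that $R(x,y)$ is an up-set so that $\mu_{R(x,y)}$ agrees with $\mu_{\sO(x\times y)}$ on the relevant intervals---are genuine details that the paper leaves implicit, and your treatment of them is accurate.
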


\begin{proof}
  The first formula follows from
  \[
    \{r\}=p_y^*[b][a]^\vee p_x^ * \overset{\eqref{eq:pp*}}=
    \sum_{s\subseteq r}p_y^*[b]p_s^*[a]^\vee p_x^*
    \overset{\eqref{eq:sur3}}= \sum_{s\subseteq r\atop s\auf
      x,y}p_y^*[b]p_s^*[a]^\vee p_x^*= \sum_{s\in R(x,y)\atop
      s\subseteq r}(s).
  \]
  The second formula now follows from Möbius inversion.
\end{proof}

Now we have:

\begin{proposition}\label{prop:Basis}
  Each of the two sets of morphisms $(r)$ and $\{r\}$ with
  $r\in R(x,y)$ forms a basis of $\|Hom|_\cT([x]^*,[y]^*)$.
\end{proposition}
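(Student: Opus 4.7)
The plan is to reduce to the $\{r\}$-family, establish spanning via the Möbius identification of $p_y^*\<r\>p_x^*$ with $\{r\}$ or zero, and deduce linear independence by a dimension count against the subobject decomposition.

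First, \cref{eq:rsrs} exhibits the two families $(r)$ and $\{r\}$ as related by a unitriangular change of basis with respect to the inclusion order on $R(x,y)$. Hence they span the same subspace and one forms a basis if and only if the other does; it thus suffices to treat $\{r\}$.

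For spanning, the definition of the pseudo-abelian closure gives
\[
  \|Hom|_\cT([x]^*,[y]^*) = p_y^*\,\|Hom|_{\cT^0}([x],[y])\,p_x^*,
\]
and $\|Hom|_{\cT^0}([x],[y])$ has basis $(\<r\>)_{r\in\sO(x\times y)}$ by construction. So the target space is spanned by the elements $p_y^*\<r\>p_x^* = p_y^*[b][a]^\vee p_x^*$ as $r$ ranges over $\sO(x\times y)$, where $a,b$ are the two projections of $r$. Applying \cref{eq:sur1} to $a:r\to x$ (resp.\ to $b:r\to y$) shows that $[a]^\vee p_x^*$ vanishes unless $a(r)=x$ (resp.\ $p_y^*[b]$ vanishes unless $b(r)=y$); hence the product vanishes unless $r\in R(x,y)$, and in the surviving case it equals $\{r\}$ by definition. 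So $\{r\}_{r\in R(x,y)}$ spans.

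Linear independence will come from a dimension count. The subobject decomposition \cref{eq:subobjdec} gives
\[
  \|Hom|_\cT([x],[y]) = \bigoplus_{u\subseteq x,\,v\subseteq y}\|Hom|_\cT([u]^*,[v]^*),
\]
and the left-hand side has $\KK$-dimension $|\sO(x\times y)|$ via the basis $(\<r\>)$. The assignment $r\mapsto(a(r),b(r))$ partitions $\sO(x\times y)=\bigsqcup_{u,v}R(u,v)$, so $|\sO(x\times y)| = \sum_{u,v}|R(u,v)|$. The spanning step applied at each pair $(u,v)$ gives $\dim\|Hom|_\cT([u]^*,[v]^*)\le|R(u,v)|$; summing then forces equality in every term. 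Hence each family $\{r\}_{r\in R(u,v)}$, and in particular $\{r\}_{r\in R(x,y)}$, is a basis, and by the initial reduction so is $(r)_{r\in R(x,y)}$. The main subtlety is the vanishing in the spanning step, but this is immediate from \cref{eq:sur1}: the relevant sums over subobjects of $r$ with prescribed image under $a$ or $b$ become empty whenever $r\notin R(x,y)$. The remainder is combinatorial bookkeeping against \cref{eq:subobjdec}.
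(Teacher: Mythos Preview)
Your argument is correct. The reduction to the $\{r\}$-family and the spanning step match the paper exactly, invoking \eqref{eq:rsrs} and \eqref{eq:sur1} in the same way.

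Where you diverge is in the linear independence step. The paper proves it locally: it observes that for $r\in R(x,y)$ one has $p_y^*\<r\>p_x^*\in\<r\>+\KK[\<R'\>]$, where $R'=\sO(x\times y)\setminus R(x,y)$; since the $\<r\>$ for all $r$ are already a basis of $\|Hom|_\cT([x],[y])$, this triangularity immediately gives the independence of the $\{r\}$ for $r\in R(x,y)$. Your route is global: you use the subobject decomposition \eqref{eq:subobjdec} to split $\|Hom|_\cT([x],[y])$ as a direct sum over pairs $(u,v)$, match the total dimension against the partition $\sO(x\times y)=\bigsqcup_{u,v}R(u,v)$, and force equality termwise from the spanning inequality. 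Both arguments are short; the paper's avoids invoking \eqref{eq:subobjdec} and handles one pair $(x,y)$ in isolation, while yours bypasses the explicit verification of the triangularity claim (which, though easy, is left to the reader in the paper) at the cost of bringing in the full direct sum decomposition.
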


\begin{proof}
  Because of \eqref{eq:rsrs} it suffices to prove the assertion for
  $\{r\}$. Let $\Rq:=\sO(x\times y)$ the set of all relations,
  $R:=R(x,y)$, and $R':=\Rq\setminus R$ the complement. By definition
  we have
  \[
    \|Hom|([x]^*,[y]^*)=p_y^*\|Hom|([x],[y])p_x^*
    \subseteq\|Hom|([x],[y])=\KK[\<\Rq\>].
  \]
  It follows from \eqref{eq:sur1} that $p_y^*\<R'\>p_x^*=0$. Moreover,
  one checks easily that $p_y^*\<r\>p_x^*\in\<r\>+\KK[\<R'\>]$ for all
  $r\in R$. So
  $\<R\>\overset\sim\to \<\Rq\>/\<R'\>\overset\sim\to
  p_x^*\<\Rq\>p_x^*$ implies the assertion.
\end{proof}

Next we calculate the composition of basis elements. We do that for
the $(r)$-basis first.

\begin{lemma}\label{lem:produkt1}
  Let $r\in R(x,y)$ and $s\in R(y,z)$. Then
  \[\label{eq:uuq}
    (s)\,(r)=\sum_{u\in R(r,s)\atop u\subseteq r\times_y s}
    \omega_{u\auf\uq}\ (\uq).
  \]
  Here, $\uq$ is the image of $u$ in $x\times z$.
\end{lemma}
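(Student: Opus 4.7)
The plan is to unfold the definition of $(r)$ and $(s)$, convert the middle piece $[a_s]^\vee[b_r]$ into a pullback expression via \textbf{Rel2}, then push the primitive idempotents inward using \eqref{eq:sur1}, \eqref{eq:sur3} and finally contract with \eqref{eq:sur2}.

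First, write $a_r,b_r$ for the projections of $r$ and $a_s,b_s$ for those of $s$, so that $(r)=[b_r]\,p_r^*\,[a_r]^\vee$ and $(s)=[b_s]\,p_s^*\,[a_s]^\vee$. Composition gives $(s)(r)=[b_s]\,p_s^*\,[a_s]^\vee[b_r]\,p_r^*\,[a_r]^\vee$. Let $T:=r\times_y s$ (which exists by \textbf{R4}, since $a_s$ and $b_r$ are surjective, and if $T=\leer$ both sides of \eqref{eq:uuq} vanish), and let $\pi_r:T\auf r$, $\pi_s:T\auf s$ be the projections. By \textbf{Rel2} applied to this pullback square, $[a_s]^\vee[b_r]=[\pi_s]\,[\pi_r]^\vee$, so
\[
  (s)(r)=[b_s]\,p_s^*\,[\pi_s]\,[\pi_r]^\vee\,p_r^*\,[a_r]^\vee.
\]

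Next, both $\pi_r$ and $\pi_s$ are surjective, so I apply \eqref{eq:sur1}: $p_s^*[\pi_s]=\sum_{u_1\subseteq T,\ \pi_s(u_1)=s}[\pi_s]\,p_{u_1}^*$ and $[\pi_r]^\vee p_r^*=\sum_{u_2\subseteq T,\ \pi_r(u_2)=r}p_{u_2}^*\,[\pi_r]^\vee$. Multiplying these and using orthogonality $p_{u_1}^*p_{u_2}^*=\delta_{u_1,u_2}p_u^*$ collapses the double sum to a single sum over $u\in\sO(T)$ with both $\pi_r(u)=r$ and $\pi_s(u)=s$. Since $T\subseteq r\times s$, this set of $u$ is exactly $\{u\in R(r,s):u\subseteq r\times_y s\}$. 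Hence
\[
  (s)(r)=\sum_u\,[b_s\pi_s]\,p_u^*\,[a_r\pi_r]^\vee.
\]

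Finally, for each such $u$ let $\iota:u\into T$ be the inclusion; then $p_u^*=[\iota]\,q_u^*\,[\iota]^\vee$ where $q_u^*$ denotes the primitive idempotent intrinsic to $[u]$ (this identity is implicit in the isomorphism $p_u^*[T]\cong[u]^*$ and follows directly from \eqref{eq:p*p} combined with \textbf{Rel1}). Writing $\uq\subseteq x\times z$ for the image of $u$ under the composite $u\to T\to r\times s\to x\times z$ and $e:u\auf\uq$ for the canonical surjection, we have $b_s\pi_s\iota=b_{\uq}e$ and $a_r\pi_r\iota=a_{\uq}e$. Thus
\[
  [b_s\pi_s]\,p_u^*\,[a_r\pi_r]^\vee=[b_{\uq}]\,[e]\,q_u^*\,[e]^\vee\,[a_{\uq}]^\vee,
\]
and \eqref{eq:sur2} applied to the surjection $e$ turns the middle three factors into $\omega_e\,q_{\uq}^*$, yielding $\omega_{u\auf\uq}(\uq)$, which summed over $u$ is the claimed formula.

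The only real obstacle is the bookkeeping in the last step: namely making sure the idempotent $p_u^*$ viewed as an endomorphism of the ambient object $[T]$ is correctly converted into its intrinsic counterpart on $[u]$, so that \eqref{eq:sur2} can be invoked for the surjection $e:u\auf\uq$ rather than for a map out of $T$. Everything else is straightforward manipulation with the already-proved formulas.
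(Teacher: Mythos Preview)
Your proof is correct and follows essentially the same route as the paper: apply \textbf{Rel2} to the inner pair $[a_s]^\vee[b_r]$, restrict the sum to $u\in R(r,s)$ via the interaction of $p_r^*,p_s^*$ with the projections, then factor each summand through the image $\uq$ and invoke \eqref{eq:sur2}. The only cosmetic differences are that the paper inserts $\sum_{u\subseteq T}p_u^*=\mathrm{id}_{[T]}$ and then uses \eqref{eq:sur3} to kill the outer idempotents, whereas you push each of $p_s^*,p_r^*$ inward separately via \eqref{eq:sur1} and collapse with orthogonality; and the paper silently reinterprets $p_u^*$ as the intrinsic idempotent on $[u]$ in the step labeled \eqref{eq:uuxz}, while you make the identity $p_u^*=[\iota]\,q_u^*\,[\iota]^\vee$ explicit.
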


\begin{proof}
  With $t:=r\times_y s$ we get the diagram
  \[
    \cxymatrix{&&t\sur[ld]_{a''}\sur[rd]^{b''}\\
      &r\sur[ld]_a\sur[rd]^{b}&&s\sur[ld]_{a'}\sur[rd]^{b'}&\\x&&y&&z}
  \]
  Now we calculate
  \[
    \begin{split}
      (s)(r)=&[{b'}]\,p_s^*\ [a']^\vee\,[b]\ p_r^*\,[a]^\vee
      \overset{{\bf Rel2}}=[{b'}]\,p_s^*\
      [b'']\,[a'']^\vee\ p_r^*\,[a]^\vee =\\
      =&\sum_{u\subseteq t}[{b'}]\,p_s^*\ [b'']\,p_u^*\,[a'']^\vee\
      p_r^*\,[a]^\vee\overset{\eqref{eq:sur1}}= \sum_{u\in R(r,s)\atop
        u\subseteq
        t}[{b'}][b'']\,p_u^*\,[a'']^\vee[a]^\vee\overset{\eqref{eq:uuxz}}=\\
      =&\sum_{u\in R(r,s)\atop u\subseteq
        t}[\bq][f]\,p_u^*\,[f]^\vee[\aq]^\vee
      \overset{\eqref{eq:sur2}}=\sum_{u\in R(r,s)\atop u\subseteq
        t}\omega_f\,[\bq]p_\uq^*[\aq]^\vee =\sum_{u\in R(r,s)\atop
        u\subseteq t}\omega_f\ (\uq)
    \end{split}
  \]
  with $\uq$ the image of $u$ in $x\times z$:
  \[\label{eq:uuxz}
    \xymatrix@=15pt{&&u\sur[d]_f\sur[ddll]_{aa''}\sur[ddrr]^{{b'}b''}\\
      &&\uq\sur[dll]^\aq\sur[drr]_\bq\\x&&&&z}\hfill
  \]
\end{proof}

The same for $\{r\}$:

\begin{lemma}\label{lem:produkt2}
  Let $r\in R(x,y)$ and $s\in R(y,z)$. For a subobject $y'\subseteq y$
  put, by abuse of notation,
  \[
    r\times_{y'}s:=r\times_yy'\times_ys \quad\text{and}\quad
    r\circ_{y'}s:=\|im|(r\times_{y'}s\to x\times z).
  \]
  Then
  \[\label{eq:sr2}
    \{s\}\,\{r\}=
    \sum_{\substack{y'\subseteq y\\
        r\times_yy'\auf x\\
        y'\times_ys\auf z}}
    \mu(y',y)\delta(r\times_{y'}s\to
    r\circ_{y'}s)\{r\circ_{y'}s\}.
  \]
\end{lemma}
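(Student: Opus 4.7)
The plan is to reduce \eqref{eq:sr2} to direct applications of \textbf{Rel1}, \textbf{Rel2}, \textbf{Rel3}, using the Möbius inversion $p_y^* = \sum_{y'\subseteq y}\mu(y',y)\,p_{y'}$ to produce the outer sum over $y'\subseteq y$. First, using idempotence of $p_y^*$ together with $\{r\} = p_y^*\<r\>p_x^*$ and $\{s\} = p_z^*\<s\>p_y^*$, I would write
\[
\{s\}\{r\} = p_z^*\<s\>p_y^*\<r\>p_x^* = \sum_{y'\subseteq y}\mu(y',y)\,p_z^*\<s\>p_{y'}\<r\>p_x^*,
\]
reducing the task to evaluating each inner term.

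Next I would compute $\<s\>p_{y'}\<r\>$ in closed form. Expanding $p_{y'} = [i][i]^\vee$ for the inclusion $i:y'\into y$ and writing $\<r\>=[b][a]^\vee$, $\<s\>=[b'][a']^\vee$ produces a product of six generators of $\cT$. I would then apply \textbf{Rel2} three times — for the pullbacks $y'\times_y s$, $r\times_y y'$, and their iterated pullback $T:=(r\times_y y')\times_{y'}(y'\times_y s) = r\times_{y'}s$ — to bring all $[\cdot]$'s to the left of all $[\cdot]^\vee$'s, then collapse the two halves via \textbf{Rel1}. All three pullbacks exist by \textbf{R4}, since each descends from one of the surjections $r\auf y$, $s\auf y$. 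The outcome is
\[
\<s\>p_{y'}\<r\> = [b_T][a_T]^\vee,
\]
where $a_T:T\to x$ and $b_T:T\to z$ are the evident projections obtained as compositions through $r$ and $s$.

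The final step is to factor $T\to x\times z$ as $T\auf T'\into x\times z$ with $T'=r\circ_{y'}s$ and surjection $h:T\auf T'$, and to let $a'':T'\to x$, $b'':T'\to z$ denote the induced projections. Writing $a_T=a''h$, $b_T=b''h$ and applying \textbf{Rel1} together with \textbf{Rel3} (which yields $[h][h]^\vee = \delta(h)\|id|$) gives
\[
p_z^*\<s\>p_{y'}\<r\>p_x^* = \delta(T\auf T')\,p_z^*\<T'\>p_x^*.
\]
A short application of \eqref{eq:sur1} shows $p_z^*\<T'\>p_x^* = \{T'\}$ when $T'\in R(x,z)$ and vanishes otherwise. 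The projections $T\auf r\times_y y'$ and $T\auf y'\times_y s$ are surjective (pullbacks of surjections), so $a''$ is surjective iff $r\times_y y'\auf x$ and $b''$ is surjective iff $y'\times_y s\auf z$. Assembling these pieces and multiplying by $\mu(y',y)$ produces exactly \eqref{eq:sr2}.

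The main bookkeeping obstacle is keeping the three layers of pullbacks straight and checking that the surjectivity conditions defining the sum on the right of \eqref{eq:sr2} correspond exactly to the requirement $T'\in R(x,z)$; the algebraic manipulation itself is essentially mechanical once the pullback structure is laid out.
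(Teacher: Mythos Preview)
Your argument is correct and follows essentially the same route as the paper: expand the middle $p_y^*$ via M\"obius inversion, then for each $y'$ reduce $p_z^*\langle s\rangle p_{y'}\langle r\rangle p_x^*$ through the pullback $r\times_{y'}s$ and its epi--mono factorization to $x\times z$, using \textbf{Rel3} to extract the $\delta$-factor. The only cosmetic difference is that you spell out the three applications of \textbf{Rel2} and the existence of the pullbacks via \textbf{R4}, whereas the paper packages this into a single diagram and the line $p_z^*[\bar b][f][f]^\vee[\bar a]^\vee p_x^*=\delta(f)\,p_z^*[\bar b][\bar a]^\vee p_x^*$.
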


\begin{proof}
  By definition
  \[
    \{s\}\{r\}=p_z^*[{b'}][a']^\vee\ p_y^*\ [b][a]^\vee p_x^*=
    \sum_{y'\subseteq y}\mu(y',y)p_z^*[{b'}][a']^\vee\ p_{y'}\
    [b][a]^\vee p_x^*
  \]
  Taking preimages of $y'$ yields the diagram
  \[
    \cxymatrix{
      &&r\times_{y'}s\sur[dl]\sur[dr]\ar@{..>>}[d]^f\\
      &r\times_y{y'}\sur[dr]\ar[dl]&r\circ_{y'}s\ar@{..>}[dll]^\aq\ar@{..>}[drr]_\bq&{y'}\times_ys\sur[dl]\ar[dr]\\
      x&&y'&&z\\
    }
  \]
  Thus
  \[
    p_z^*[{b'}][a']^\vee\ p_{y'}\ [b][a]^\vee p_x^*=p_z^*[\bq]\
    [f][f]^\vee\ [\aq]^\vee p_x^*=\delta(f)\ p_z^*[\bq][\aq]^\vee
    p_x^*.
  \]
  The last expression equals $\delta(f)\,\{r\circ_{y'}s\}$ if both
  $\aq$ and $\bq$ are surjective and is zero otherwise.
\end{proof}

\begin{remark}
  Using equations \eqref{eq:rsrs} and \eqref{eq:uuq} one can express
  $\{s\}\{r\}$ also in terms of the $(r)$-basis resulting in
  \[\label{eq:sr3}
    \{s\}\,\{r\}=\sum_{u\subseteq r\times_y s\atop u\auf
      x,y,z}\omega_{u\auf\uq}\ (\uq)
  \]
  where $\uq$ denotes the image of $u$ in $x\times z$. For exact
  Mal'tsev categories we will show the more stringent formula
  \eqref{eq:Malcevprod} below.
\end{remark}

\cref{prop:Basis} and \cref{lem:produkt1} or \cref{lem:produkt2}
describe $\cT(\cA,\delta)$ as an additive category. Now we turn to its
monoidal structure.

\begin{lemma}
  Let $x$ and $y$ be objects of $\cA$. Then
  \[\label{eq:potimesp}
    p_x^*\otimes p_y^*=\sum_{r\in R(x,y)}p_r^*\ \in\ \|End|([x\times
    y])
  \]
  In particular, there is a canonical isomorphism
  \[\label{eq:tensorstar}
    [x]^*\otimes[y]^*\cong\bigoplus_{r\in R(x,y)}[r]^*
  \]
\end{lemma}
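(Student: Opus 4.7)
The plan is to prove \eqref{eq:potimesp} by testing both sides against the primitive orthogonal idempotents $p_r^*$, $r\in\sO(x\times y)$, whose sum is $\|id|_{[x\times y]}$; the isomorphism \eqref{eq:tensorstar} will then drop out by orthogonality together with the observation (made just after \eqref{eq:subobjdec}) that $p_r^*[x\times y]\cong[r]^*$.

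The one preliminary geometric identity I would record is
\[
  p_u\otimes p_v = p_{u\times v}\qquad\text{in}\ \|End|_\cT([x\times y])
\]
for all $u\in\sO(x)$, $v\in\sO(y)$, where $u\times v$ is viewed as a subobject of $x\times y$. Indeed, $p_u=\<\Delta_u\>$ and $p_v=\<\Delta_v\>$ are represented by diagonal relations, so the tensor product rule \eqref{eq:tensorrs}, combined with the canonical shuffle $(x\times x)\times(y\times y)\cong(x\times y)\times(x\times y)$, sends $\Delta_u\times\Delta_v$ to the diagonal of $u\times v$, i.e.\ to the relation representing $p_{u\times v}$.

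Expanding via \eqref{eq:p*p} gives
\[
  p_x^*\otimes p_y^* = \sum_{u\subseteq x,\,v\subseteq y}\mu(u,x)\,\mu(v,y)\,p_{u\times v}.
\]
Now I compute $p_r^*(p_x^*\otimes p_y^*)$ for an arbitrary $r\in\sO(x\times y)$ with projections $a(r)\subseteq x$ and $b(r)\subseteq y$. By \eqref{eq:p*p*}, the factor $p_r^*\,p_{u\times v}$ equals $p_r^*$ if $r\subseteq u\times v$ (equivalently $a(r)\subseteq u$ and $b(r)\subseteq v$) and vanishes otherwise. The double sum factorises into two independent Möbius sums over $u\supseteq a(r)$ and $v\supseteq b(r)$, each of which collapses to $\delta_{a(r),x}$ (resp.\ $\delta_{b(r),y}$) by \eqref{eq:summu}. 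Hence $p_r^*(p_x^*\otimes p_y^*)=p_r^*$ exactly when $r\in R(x,y)$ and vanishes otherwise; summing over all $r$ and using $\sum_r p_r^*=\|id|_{[x\times y]}$ yields \eqref{eq:potimesp}.

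For \eqref{eq:tensorstar} I would then split the image of $p_x^*\otimes p_y^*$ along the orthogonal summands $p_r^*$ with $r\in R(x,y)$, obtaining
\[
  [x]^*\otimes[y]^* = (p_x^*\otimes p_y^*)[x\times y] \cong \bigoplus_{r\in R(x,y)}[r]^*,
\]
via the inclusions $r\into x\times y$. The only genuinely subtle point is the geometric identity $p_u\otimes p_v=p_{u\times v}$; once it is established, the rest is routine Möbius inversion plus the orthogonality relations \eqref{eq:p*p*}.
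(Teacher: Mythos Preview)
Your argument is correct. You expand $p_x^*\otimes p_y^*$ via \eqref{eq:p*p} and the identity $p_u\otimes p_v=p_{u\times v}$, then test against each $p_r^*$ and let the two independent M\"obius sums collapse by \eqref{eq:summu}; this gives \eqref{eq:potimesp} directly, and \eqref{eq:tensorstar} follows.

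The paper proceeds differently. It also uses $p_u\otimes p_v=p_{u\times v}$ to see that $P:=p_x^*\otimes p_y^*$ lies in the M\"obius algebra of $\sO(x\times y)$, hence $P=\sum_{r\in R_0}p_r^*$ for some subset $R_0$. A vanishing argument ($p_up_x^*\otimes p_vp_y^*=0$ for $u\subsetneq x$ or $v\subsetneq y$) forces $R_0\subseteq R(x,y)$. The reverse inclusion is obtained not by computation but by a dimension count: self-duality of $[x]^*$ gives $\dim\operatorname{Hom}([x]^*,[y]^*)=\dim\operatorname{Hom}(\1,[x]^*\otimes[y]^*)$, and together with \cref{prop:Basis} and $\dim\operatorname{Hom}(\1,[r]^*)=1$ this yields $|R_0|=|R(x,y)|$. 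Your route is more elementary and self-contained, avoiding both \cref{prop:Basis} and the rigidity of $\cT$; the paper's route is shorter once those ingredients are already on the table, and it illustrates how the monoidal/duality structure can replace an explicit M\"obius computation.
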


\begin{proof}
  The idempotent $P:=p_x^*\otimes p_y^*$ is contained in the algebra
  spanned by the projections $p_{u\times v}$ with $u\subseteq x$ and
  $v\subseteq y$. So it must be a sum $P=\sum_{r\in R_0}p_r^*$ of
  minimal idempotents where $R_0$ is a certain subset of
  $\sO(x\times y)$. Let $u\subseteq x$ and $v\subseteq y$ with
  $u\ne x$ or $v\ne y$. Then $p_u\otimes p_v=p_{u\times v}$ implies
  \[
    0\overset{\eqref{eq:pp*}}=p_u p_x^*\otimes p_vp_y^*=\sum_{r\in
      R_0}p_{u\times y}p_r^*=\sum_{r\in R_0\atop r\subseteq u\times
      v}p_r^*.
  \]
  So the right hand sum is empty which means that
  $R_0\subseteq R(x,y)$. Because $[x]^*$ is self-dual and because of
  $\dim_\KK\|Hom|_\cT(\1,[x]^*)=1$ we have
  \[
    \begin{split}
      |R(x,y)|=&\dim_\KK\|Hom|([x]^*,[y]^*)=\dim_\KK\|Hom|(\1,[x]^*\otimes[y]^*)=\\
      =&\sum_{r\in R_0}\dim_\KK\|Hom|_\cT(\1,[r]^*)=|R_0|.
    \end{split}
  \]
  Thus $R_0=R(x,y)$.
\end{proof}

\begin{remark}\label{eq:tensorstar2}
  Formula \eqref{eq:tensorstar} generalizes readily to tensor products
  with more than two factors. More precisely, let $x_1,\ldots,x_n$ be
  objects of $\cA$. Then
  \[
    [x_1]^*\otimes\ldots\otimes[x_n]^*\cong\bigoplus_r\ [r]^*
  \]
  where $r$ runs through all subobjects of $x_1\times\ldots\times x_n$
  such that the projections $r\to x_i$ are surjective for
  $i=1,\ldots,n$. In particular, the associativity morphism
  $(x\times y)\times z\overset\sim\to x\times(y\times z)$ yields the
  associativity constraint
  \[\label{eq:tensorass}
    ([x]^*\otimes[y]^*)\otimes[z]^*\overset\sim\to
    [x]^*\otimes([y]^*\otimes[z]^*).
  \]
  Likewise, the canonical isomorphism
  $x\times y\overset\sim\to y\times x$ yields the commutativity
  constraint
  \[\label{eq:tensorcom}
    [x]^*\otimes[y]^*\overset\sim\to[y]^*\otimes[x]^*.
  \]
  
\end{remark}

Next we investigate the tensor product as a functor. For this we
need to determine for any morphisms $\phi:[x]^*\to[y]^*$ and
$\psi:[x']^*\to[y']^*$ the matrix coefficients of
\[
  \bigoplus_{u\in R(x,x')}[u]^*=[x]^*\otimes[x']^*
  \overset{\phi\otimes\psi}\longrightarrow
  [y]^*\otimes[y']^*=\bigoplus_{v\in R(y,y')}[v]^*.
\]
Clearly, it suffices to do this for a basis and we start with the
$(r)$-basis.

\begin{lemma} Let
  \[
    \cxymatrix{&r\sur[ld]_a\sur[rd]^b\\x&&y}\text{ and }
    \cxymatrix{&r'\sur[ld]_{a'}\sur[rd]^{b'}\\x'&&y'}
  \]
  be elements of $R(x,y)$ and $R(x',y')$, respectively. For each
  $w\in R(r,r')$ define
  \[
    r_w:=(a\times a')(w)\subseteq x\times x', \qquad r'_w:=(b\times
    b')(w)\subseteq y\times y'
  \]
  and the morphism
  \[
    \tau_w: [x]^*\otimes[x']^*\auf[r_w]^*\overset{(w)}
    \longrightarrow[r'_w]^*\into[y]^*\otimes[y']^*
  \]
  where $(w)$ is induced by the inclusion $w\into r_w\times
  r_w'$. Then
  \[\label{eq:tensorss}
    (r)\otimes(r')=\sum_{w\in R(r,r')}\tau_w.
  \]
\end{lemma}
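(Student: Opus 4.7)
The plan is to reduce to the decomposition of $p_r^*\otimes p_{r'}^*$ proved in \eqref{eq:potimesp}. Setting $\alpha:=a\times a'$ and $\beta:=b\times b'$, functoriality of the tensor product ($[f]\otimes[g]=[f\times g]$ in $\cT^0$, see \eqref{eq:tensorrs}) combined with $(r)=[b]\,p_r^*\,[a]^\vee$ and its analogue for $r'$ gives
\[
  (r)\otimes(r')=[\beta]\,(p_r^*\otimes p_{r'}^*)\,[\alpha]^\vee
\]
as a morphism $[x\times x']\to[y\times y']$. Applying \eqref{eq:potimesp} to the middle factor yields
\[
  (r)\otimes(r')=\sum_{w\in R(r,r')}[\beta]\,p_w^*\,[\alpha]^\vee,
\]
where $p_w^*\in\|End|([r\times r'])$.

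It remains to identify each summand with $\tau_w$. The key is a ``change of ambient'' identity,
\[
  p_w^*|_{\|End|([r\times r'])}\;=\;[i_w]\,p_w^*|_{\|End|([w])}\,[i_w]^\vee,
\]
where $i_w\colon w\into r\times r'$ denotes the inclusion. I would verify this by expanding both sides via the Möbius formula \eqref{eq:p*p}: for each subobject $v\subseteq w$, \textbf{Rel1} gives $[i_w]\,p_v|_{[w]}\,[i_w]^\vee=p_v|_{[r\times r']}$, and the Möbius functions on the interval $[v,w]$ coincide in the posets $\sO(w)$ and $\sO(r\times r')$.

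Substituting this identity and factoring $\alpha\circ i_w=j_w\circ a_w$ and $\beta\circ i_w=j'_w\circ b_w$---where $a_w\colon w\auf r_w$, $b_w\colon w\auf r'_w$ are the canonical surjections onto the images and $j_w,j'_w\colon r_w\into x\times x',\,r'_w\into y\times y'$ are the corresponding inclusions---\textbf{Rel1} transforms $[\beta]\,p_w^*\,[\alpha]^\vee$ into
\[
  [j'_w]\,[b_w]\,p_w^*|_{[w]}\,[a_w]^\vee\,[j_w]^\vee.
\]
Since $w\into r_w\times r'_w$ realizes $w$ as an element of $R(r_w,r'_w)$, the middle factor $[b_w]\,p_w^*\,[a_w]^\vee$ is by definition the morphism $(w)\colon[r_w]^*\to[r'_w]^*$. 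The outer factors $[j_w]^\vee$ and $[j'_w]$ realize the projection from $[x]^*\otimes[x']^*$ onto the direct summand $[r_w]^*$ and the inclusion of $[r'_w]^*$ into $[y]^*\otimes[y']^*$, respectively, so the whole expression matches $\tau_w$.

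The main obstacle is the notational overloading: the symbol $p_w^*$ denotes genuinely different idempotents in the various ambient spaces $[r\times r']$, $[w]$, and $[r_w\times r'_w]$. Once the change-of-ambient identity has been isolated, the proof reduces to a careful application of \textbf{Rel1} to the canonical epi-mono factorizations of $\alpha i_w$ and $\beta i_w$.
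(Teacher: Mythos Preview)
Your argument is correct and follows the same line as the paper: expand $(r)\otimes(r')$ via \eqref{eq:potimesp} and then identify each summand $[\beta]\,p_w^*\,[\alpha]^\vee$ with $\tau_w$. The paper carries out the identification more tersely, invoking \eqref{eq:sur3} to insert the projectors $p_{r_w}^*$ and $p_{r'_w}^*$ on either side rather than your explicit change-of-ambient; one small point the paper verifies but you only assert is that $w\to r_w\times r'_w$ is injective (needed for $(w)$ to be defined), which follows since $w\into r\times r'\into x\times y\times x'\times y'$ factors through $r_w\times r'_w$.
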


\begin{proof}
  The morphism $w\to r_w\times r_w'$ is in fact injective since
  $w\into r\times r'\into x\times y\times x'\times y'$ and therefore
  also $w\to r_w\times r_{w'}\to x\times x'\times y\times y'$ is
  injective. Below is a diagram of the setup. All four quadrilaterals
  commute.
  \[ \vcenter{\xymatrix@=10pt{
        &&&w\sur[dl]\sur[dr]\ar@{.>>}[dll]\ar@{.>>}[drr]\\
        &r_w\ar@{.>>}[ddl]\ar@{.>>}[ddr]&r\sur[ddll]\sur[ddrr]&&r'\sur[ddll]\sur[ddrr]&r_w'\ar@{.>>}[ddl]\ar@{.>>}[ddr]\\
        &&&&&&\\
        x&&x'&&y&&y' }}
  \]
  Let $\aq:=a\times a'$ and $\bq:=b\times b'$. Then
  \[
    \begin{split}
      (r)\otimes(r')=&[b]\,p_r^*\,[a]^\vee\otimes[b']\,p_{r'}^*\,[a']^\vee=
      [b\times b'](p_r^*\otimes p_{r'}^*)[a\times a']^\vee
      \overset{\eqref{eq:potimesp}}=\\
      =&\sum_{w\in
        R(r,r')}[\bq]\,p_w^*\,[\aq]^\vee\overset{\eqref{eq:sur1}}=
      \sum_{w\in
        R(r,r')}p_{r_w'}^*\,[\bq]\,p_w^*\,[\aq]^\vee\,p_{r_w}^*=
      \sum_{w\in R(r,r')}\tau_w.\qedhere
    \end{split}
  \]
\end{proof}

Now the same for the $\{r\}$-basis.

\begin{lemma} Let
  \[
    \cxymatrix{&r\sur[ld]_a\sur[rd]^b\\x&&y}\text{ and }
    \cxymatrix{&r'\sur[ld]_{a'}\sur[rd]^{b'}\\x'&&y'}
  \]
  be elements of $R(x,y)$ and $R(x',y')$, respectively. For each
  $u\in R(x,x')$ and $v\in R(y,y')$ define
  \[
    w_{u,v}:=u\Times_{x\times x'}(r\times r')\Times_{y\times y'}v
  \]
  (see diagram \eqref{eq:wuv} below) and the morphism
  \[
    \sigma_{u,v}: [x]^*\otimes[x']^*\auf[u]^*\overset{\{w_{u,v}\}}
    \longrightarrow[v]^*\into[y]^*\otimes[y']^*
  \]
  Then
  \[\label{eq:tensorss2}
    \{r\}\otimes\{r'\}=\sum_{u\in R(x,x')\atop v\in
      R(y,y')}\sigma_{u,v}.
  \]
\end{lemma}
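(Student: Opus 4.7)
The plan is to expand $\{r\}\otimes\{r'\}$ directly from the definitions and then match the $(u,v)$-component of the resulting expression against $\sigma_{u,v}$. Using $\{r\}=p_y^*[b][a]^\vee p_x^*$, the analogous expression for $\{r'\}$, and the identity $[f]\otimes[g]=[f\times g]$ (immediate from the definition of the tensor product of relations via \eqref{eq:tensorrs}), one obtains
\[
\{r\}\otimes\{r'\}=(p_y^*\otimes p_{y'}^*)\,[b\times b']\,[a\times a']^\vee\,(p_x^*\otimes p_{x'}^*).
\]
Applying \eqref{eq:potimesp} to both outer factors yields
\[
\{r\}\otimes\{r'\}=\sum_{u\in R(x,x'),\,v\in R(y,y')}p_v^*\,[b\times b']\,[a\times a']^\vee\,p_u^*,
\]
so it suffices to identify each term with $\sigma_{u,v}$.

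The next step is to re-express $\sigma_{u,v}$ inside the ambient hom-space $\|Hom|_{\cT}([x\times x'],[y\times y'])$, where the expansion above takes place. Using the compatibility $[\iota_u]\,P_u^*=p_u^*\,[\iota_u]$ between the primitive idempotent $P_u^*\in\|End|([u])$ and the primitive idempotent $p_u^*\in\|End|([x\times x'])$ (which follows by Möbius-inverting the analogous equality $p_v[\iota_u]=[\iota_u]P_v$ for $v\subseteq u$, itself a direct consequence of \textbf{Rel2} applied to the trivial pullback square $v\times_u v=v$), and the same compatibility for $v$, one verifies that under the canonical embeddings $[u]^*\into[x\times x']$ and $[v]^*\into[y\times y']$ the morphism $\sigma_{u,v}$ corresponds precisely to $p_v^*\,\<w_{u,v}\>\,p_u^*$, with $w_{u,v}$ now viewed as a subobject of $(x\times x')\times(y\times y')$ through $w_{u,v}\subseteq u\times v\into(x\times x')\times(y\times y')$.

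The heart of the argument is then the equality
\[
p_v^*\,\<r\times r'\>\,p_u^*=p_v^*\,\<w_{u,v}\>\,p_u^*.
\]
Writing $A:=a\times a'$ and $B:=b\times b'$, I apply \eqref{eq:sur1} to $[A]^\vee p_u^*$ and \eqref{eq:sur3} to $p_v^*[B]p_w^*$. Both sides expand to the same sum
\[
\sum_{\substack{w\subseteq w_{u,v}\\ A(w)=u,\,B(w)=v}}[B]\,p_w^*\,[A]^\vee.
\]
On the left the constraints $A(w)=u$ and $B(w)=v$ force $w\subseteq A^{-1}(u)\cap B^{-1}(v)=w_{u,v}$ by definition of the fibred product; on the right the projections of $w_{u,v}$ factor through $u$ and $v$, so the same conditions become the surjectivity requirements for $w\subseteq w_{u,v}$. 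Thus both computations yield the identical expression.

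The main obstacle is not a hard computation but the organisational step of identifying $\sigma_{u,v}$, an a priori morphism in $\|Hom|([u]^*,[v]^*)$, with its natural image $p_v^*\<w_{u,v}\>p_u^*$ in $\|Hom|([x\times x'],[y\times y'])$, where $w_{u,v}$ is reinterpreted in the larger ambient product. Once this identification is in place, the argument reduces to the bookkeeping identity above, and the tensor product formula \eqref{eq:tensorss2} follows by summing over $(u,v)\in R(x,x')\times R(y,y')$.
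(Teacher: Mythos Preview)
Your proof is correct and follows the same route as the paper: expand $\{r\}\otimes\{r'\}$ as $(p_y^*\otimes p_{y'}^*)[b\times b'][a\times a']^\vee(p_x^*\otimes p_{x'}^*)$, apply \eqref{eq:potimesp} on both sides, and identify the $(u,v)$-term with $\sigma_{u,v}$. The paper records only the first two steps and writes the last equality without comment; you spell out the identification $p_v^*\langle r\times r'\rangle p_u^*=p_v^*\langle w_{u,v}\rangle p_u^*$ via \eqref{eq:sur1}/\eqref{eq:sur3} and the observation $A^{-1}(u)\cap B^{-1}(v)=w_{u,v}$, which is a welcome elaboration.

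One small omission: you use $w_{u,v}\subseteq u\times v$ (so that $\{w_{u,v}\}$ is a well-defined morphism $[u]^*\to[v]^*$) without justification. The paper devotes its first paragraph to proving exactly this injectivity, arguing that $w_{u,v}\hookrightarrow u\times(r\times r')\times v$ already factors through the image $\overline{w}$ of $w_{u,v}$ in $u\times v$, since $r\times r'\hookrightarrow x\times y\times x'\times y'$ is a monomorphism. You should add this check; once it is in place, your argument is complete and matches the paper's.
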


\begin{proof} First we claim that $w_{u,v}\to u\times v$ is
  injective. For this, let $\wq$ be the image of $w_{u,v}$ in
  $u\times v$. Then $w_{u,v}\to x\times x'\times y\times y'$ would
  factor through $\wq$. Then the same holds for
  $w_{u,v}\to r\times r'$ since
  $r\times r'\to x\times y\times x'\times y'$ is injective. So also
  the injective morphism $w_{u,v}\to u\times r\times r'\times v$
  factors through $\wq$ which shows the claim $w_{u,v}=\wq$.
  \[\label{eq:wuv}
    \vcenter{\xymatrix@=10pt{
        &&&w_{u,v}\ar@{.>>}[dl]\ar@{.>>}[dr]\ar@{.>>}[dll]\ar@{.>>}[drr]\\
        &u\sur[ddl]\sur[ddr]&r\sur[ddll]\sur[ddrr]&
        &r'\sur[ddll]\sur[ddrr]&v\sur[ddl]\sur[ddr]\\
        &&&&&&\\
        x&&x'&&y&&y' }}
  \]
  With $\aq:=a\times a'$ and $\bq:=b\times b'$ we get
  \[
    \{r\}\otimes\{r'\}=(p_y^*\otimes
    p_{y'}^*)[\bq][\aq]^\vee(p_x^*\otimes p_{x'}^*)
    \overset{\eqref{eq:potimesp}}= \sum_{u\in R(x,x')\atop v\in
      R(y,y')}p_v^*[\bq][\aq]^\vee p_u^*= \sum_{u\in R(x,x')\atop v\in
      R(y,y')}\sigma_{u,v}.\qedhere\qed
  \]
\end{proof}

\section{A formula for exact Mal'tsev categories}

As explained in the introduction, formula \eqref{eq:sr2} for the
product $\{s\}\{r\}$ does not appear in \cite{Deligne}. Instead
another identity is proven (\cite{Deligne}*{2.11}) which we consider
now in our framework. For that we have to assume that $\cA$ is exact
and Mal'tsev.

\begin{definition}
  A regular category $\cA$ is \emph{Mal'tsev} if for every object $x$
  any subobject $r\subseteq x\times x$ containing the diagonal is an
  equivalence relation. The category is \emph{exact} if for every
  equivalence relation $r\subseteq x\times x$ there is a surjective
  morphism $x\auf y$ with $r=x\times_yx$.
\end{definition}

\begin{example*}

\end{example*}

The quotient object $y$ of $x$ is uniquely determined by
$r$. Therefore, in an exact Mal'tsev category there is a duality between
subobjects of $x\times x$ containing the diagonal and quotient objects
of $x$. This can be generalized to relations between different objects
$x$ and $y$. For this we define $R_\qO(x,y)$ to be the set of
isomorphism classes of diagrams
    \[\label{eq:corelation}
      \kxymatrix{x\sur[dr]_a&&y\sur[dl]^b\\&u}.
    \]

    It is clear that for every $u\in R_\qO(x,y)$ the fiber product
    $x\times_uy$ is in $R(x,y)$.

\begin{lemma}[{\cite{CKP}*{Thm.~5.5}}]
  Let $x$ and $y$ be objects of an exact Mal'tsev category $\cA$. Then the
  map
  \[\label{lem:push-pull}
    R_\qO(x,y)\to R(x,y):u\mapsto r=x\times_uy
  \]
  is bijective, the inverse being the push-out
  $r\mapsto u=x\amalg_ry$.
\end{lemma}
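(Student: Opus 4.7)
The plan is to show separately that (i) the push map $u \mapsto r = x \times_u y$ lands in $R(x,y)$, (ii) an inverse pull map $r \mapsto u$ can be constructed using the exact Mal'tsev hypothesis, and (iii) the two constructions are mutually inverse up to canonical isomorphism.

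For (i), given a corelation with surjections $a: x \auf u$ and $b: y \auf u$, the fiber product $r = x \times_u y$ exists by \textbf{R3}; the canonical map $r \to x \times y$ is a monomorphism by the universal property of the pull-back; and both projections $r \to x$ and $r \to y$ are surjective by \textbf{R4}, being pull-backs of $b$ and $a$ respectively. Hence $r \in R(x,y)$.

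For (ii), let $r \in R(x,y)$ with projections $a: r \auf x$ and $b: r \auf y$, and consider the composed relation $E_x := r \circ r^\vee \subseteq x \times x$, the image of $r \times_y r \to x \times x$. The diagonal $r \to r \times_y r$ forces $\Delta x \subseteq E_x$ (using surjectivity of $a$). By the Mal'tsev property, any reflexive subobject of $x \times x$ is an equivalence relation, so $E_x$ is one; by exactness it is the kernel pair of a surjection $a^*: x \auf u$. Symmetrically, $E_y := r^\vee \circ r$ is the kernel pair of some $b^*: y \auf v$. Two points of $r$ with a common image under $b$ have their $x$-coordinates in $E_x$, so $a^* a: r \to u$ coidentifies the $b$-fibers and factors through $b$ as some $c: y \to u$; by symmetry there is $d: x \to v$. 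A standard diagram chase identifies $u$ and $v$ via a canonical isomorphism, yielding the required corelation in $R_\qO(x,y)$.

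The easier round-trip is $u \mapsto r \mapsto u$: starting from $x \auf u \fau y$, one computes directly that $r \circ r^\vee = x \times_u x$, which is the kernel pair of $x \auf u$, so exactness recovers $u$ up to canonical isomorphism. The main obstacle is the reverse round-trip $r \mapsto u \mapsto r$: one must establish that the recovered fiber product equals $r$ exactly, i.e., that the construction of $u$ does not identify pairs beyond those dictated by $r$. This is a Barr--Kock-type injectivity statement that genuinely requires both exactness and the Mal'tsev hypothesis, and constitutes the real content of \cite{CKP}*{Thm.~5.5}, to which I would appeal for the final verification.
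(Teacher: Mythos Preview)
The paper does not prove this lemma at all: it is stated with the attribution \cite{CKP}*{Thm.~5.5} and no argument is given. So there is no ``paper's own proof'' to compare against; the result is used as a black box from the literature.

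Your sketch is a reasonable outline of how such a proof goes, and parts (i) and the easy round-trip in (iii) are correct. However, your proposal is not self-contained: the phrase ``a standard diagram chase identifies $u$ and $v$'' in (ii) hides nontrivial work (one needs difunctionality of $r$, which is where the Mal'tsev hypothesis really enters), and for the hard round-trip $r\mapsto u\mapsto r$ you explicitly appeal to \cite{CKP}*{Thm.~5.5}---which is precisely the result being stated. So in the end you, like the paper, are treating this as a citation rather than giving an independent proof; you have just unpacked a bit more of the surrounding structure. If the intent was merely to indicate why the statement is plausible and where the difficulty lies, your write-up does that well; if the intent was to supply a proof the paper omits, the key step is still missing.
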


Now for every $u\in R_\qO(x,y)$ we define
\[\label{eq:varr}
  \{u\}' :=p_y^*[b]^\vee[a]p_x^*\overset{\mathbf{Rel2}}=\{x\times_uy\}.
\]
There is an order relation on $R_\qO(x,y)$ by defining $u\le v$ when
$x,y\auf v$ factors through $x,y\auf u$. This way, $x,y\auf\*$ becomes
the maximal element of $R_\qO(x,y)$. Moreover, \eqref{lem:push-pull}
is order preserving. In particular, for any $u\in R_\qO(x,y)$ the
interval $[u,\*]$ can be identified with the set $\qO(u)$ of quotient
objects and therefore forms a lattice. Therefore it carries a Möbius
function.

and \eqref{eq:rsrs} becomes
\[\label{eq:rrss}
  \{u\}'=\sum_{t\in R_\qO(x,y)\atop t\le u}(x\times_ty)
  \quad\text{and}\quad (r)=\sum_{u\in R_\qO(x,y)\atop u\le
    x\amalg_ry}\mu_\qO(u,x\amalg_ry)\{u\}'.
\]

  
Then the following multiplication formula generalizes
\cite{Deligne}*{2.11}:

\begin{lemma}
  Let $\cA$ be an exact Mal'tsev category. Let $u\in R_\qO(x,y)$,
  $v\in R_q(y,z)$. Let $\yq:=\|im|(y\to u\times v)$ and
  $w:=u\amalg_yv$ (such that $u,v,\yq,w$ forms a pull-back diagram,
  see \eqref{eq:produkt1} below). Then
  \[\label{eq:Malcevprod}
    \{v\}'\,\{u\}'=\omega_{y\auf\yq}\sum_{t\in R_\qO(u,v)\atop t\le w}
    \mu_\qO(t,w)\ \{t\}'.
  \]
\end{lemma}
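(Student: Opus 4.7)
The plan is to expand $\{v\}'\{u\}'$ using the composition formula \eqref{eq:sr2}, re-index the resulting sum via the exact Mal'tsev push-out/pullback duality, and finally evaluate the inner coefficient by a Möbius factorization. By \eqref{eq:varr}, $\{u\}' = \{r\}$ and $\{v\}' = \{s\}$ for $r := x\times_u y$ and $s := y\times_v z$. Applying \eqref{eq:sr2} to $\{s\}\{r\}$, the pullbacks $r\times_y y' = x\times_u y'$ and $y'\times_y s = y'\times_v z$ reduce the surjectivity hypotheses to $y'\auf u$ and $y'\auf v$, so
\[
\{v\}'\{u\}' = \sum_{\substack{y'\subseteq y\\ y'\auf u,\,v}}\mu(y',y)\,\delta\bigl(x\times_u y'\times_v z\to r\circ_{y'}s\bigr)\,\{r\circ_{y'}s\}.
\]

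For each such $y'$, set $t(y') := u\amalg_{y'}v\in R_\qO(u,v)$. The universal property of the push-out $w = u\amalg_y v$ yields $t(y')\le w$, and \eqref{lem:push-pull} identifies $r\circ_{y'}s = x\times_{t(y')}z$, so $\{r\circ_{y'}s\} = \{t(y')\}'$. Invoking \textbf{D2}, the $\delta$-factor equals $\delta(y'\auf u\times_{t(y')}v)$, since the source map is a base-change of $y'\auf\|im|(y'\to u\times v) = u\times_{t(y')}v$ (this last equality being exactly the Mal'tsev-exact duality applied to the relation of $y'$ to $u$ and $v$). Re-indexing by $t$ gives
\[
\{v\}'\{u\}' = \sum_{\substack{t\in R_\qO(u,v)\\ t\le w}}c_t\,\{t\}',\qquad c_t := \sum_{\substack{y'\subseteq y\\ t(y')=t}}\mu(y',y)\,\delta(y'\auf u\times_t v),
\]
where the condition $t(y') = t$ is equivalent to $y'\subseteq\tilde y_t := y\times_\yq(u\times_t v)$ together with $y'\auf u\times_t v$, using the bijective correspondence $s\mapsto u\times_s v$ between $R_\qO(u,v)_{\le w}$ and subobjects of $\yq$ surjecting onto $u$ and $v$.

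It then remains to prove the Möbius factorization $c_t = \omega_{y\auf\yq}\,\mu_\qO(t,w)$. The assignment $t\mapsto\tilde y_t$ embeds $R_\qO(u,v)_{\le w}$ order-preservingly into $\sO(y)$ (with $\tilde y_w = y$), and this embedding together with the surjection $y\auf\yq$ should allow one to split the ambient Möbius function $\mu(y',y)$ on $\sO(y)$ into a \emph{vertical} component (a sum over subobjects $y'\subseteq y$ inside a fixed fiber of $y\auf\yq$, reproducing $\omega_{y\auf\yq}$ via \eqref{eq:defomega} and independent of $t$) and a \emph{horizontal} component supported on $R_\qO(u,v)_{\le w}$ (yielding $\mu_\qO(t,w)$ by Möbius inversion on that lattice). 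The main obstacle is precisely this decomposition: the global Möbius function on $\sO(y)$ does not restrict automatically to a product of the two sub-Möbius functions, and one must use the exact Mal'tsev hypothesis — specifically the pullback identity $\yq = u\times_w v$ making $(u,v,\yq,w)$ a pullback square — to effect the split and to check that the horizontal contribution matches $\mu_\qO$ rather than some variant.
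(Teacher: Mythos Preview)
Your approach via \eqref{eq:sr2} is different from the paper's, and it stalls at exactly the point you flag: the ``Möbius factorization'' $c_t=\omega_{y\auf\yq}\,\mu_\qO(t,w)$ is not established, and it is not obvious. You are trying to split the Möbius function of the lattice $\sO(y)$ along the surjection $y\auf\yq$ and the embedded copy of $R_\qO(u,v)_{\le w}$, but there is no general product formula of this kind for Möbius functions of intervals under a quotient map; the exact Mal'tsev hypothesis gives you the pullback square $(u,v,\yq,w)$, but it does not by itself hand you a combinatorial identity of the form you need. As written, the argument is a reduction to an unproved lemma that is essentially as hard as the original statement.

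The paper avoids this difficulty entirely by computing at the morphism level rather than passing through \eqref{eq:sr2}. One writes $\{v\}'\{u\}'=p_z^*[d]^\vee[c]\,p_y^*\,[b]^\vee[a]\,p_x^*$, factors $b=\bq f$ and $c=\cq f$ through $f:y\auf\yq$, and then applies \eqref{eq:sur2} to $[f]\,p_y^*\,[f]^\vee$ to extract the scalar $\omega_{y\auf\yq}$ immediately, leaving $p_z^*[d]^\vee\,[\cq]\,p_\yq^*\,[\bq]^\vee\,[a]\,p_x^*$. The middle piece $[\cq]\,p_\yq^*\,[\bq]^\vee$ is precisely $(\yq)$ viewed as a morphism $[u]^*\to[v]^*$, and the second formula in \eqref{eq:rrss} expands it as $\sum_{t\le w}\mu_\qO(t,w)\,{}_v\{t\}'_u$. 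One then absorbs the outer $p_z^*[d]^\vee\,(\ \cdot\ )\,[a]\,p_x^*$ using \eqref{eq:sur1}/\eqref{eq:sur3} to turn each ${}_v\{t\}'_u$ into ${}_z\{t\}'_x$. The moral: pull out $\omega_f$ \emph{before} any Möbius inversion, so that $\mu_\qO(t,w)$ arises directly from \eqref{eq:rrss} rather than from a delicate decomposition of $\mu_{\sO(y)}$.
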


\[\label{eq:produkt1}
  \vcenter{\xymatrix@R=1pc@C=2pc{
      &x\sur[ddr]_a&&y\sur[ddl]_b\sur[d]^f\sur[ddr]^c&&z\sur[ddl]^d\\
      &&&\sur@<0.2mm>[dl]\sur@<-0.2mm>[dl]\yq\sur[dl]^\bq\sur@<0.2mm>[dr]\sur@<-0.2mm>[dr]\sur[dr]_\cq\\
      &&u\sur[dr]^\aq\sur@<0.2mm>[ddr]\sur@<-0.2mm>[ddr]\sur[ddr]_{a'}&&v\sur[dl]_\dq\sur@<0.2mm>[ddl]\sur@<-0.2mm>[ddl]\sur[ddl]^{d'}\\
      &&&t\sur[d]&&&\\
      &&&w }}
\]

\begin{proof}
Below we write ${}_v\{t\}'_u$ or ${}_z\{t\}'_x$ to indicate whether
we consider $\{t\}'$ as a morphism $[u]^*\to[v]^*$ or $[x]^*\to[z]^*$. Then we have:
\[
    \begin{split}
      \{v\}'\{u\}' ={}&p_z^*[d]^\vee[c]\ \ p_y^*\ \ [b]^\vee[a]p_x^*=\\
      ={}&p_z^*[d]^\vee\ \ [\cq][f]p_\yq^*[f]^\vee[\bq]^\vee\ \ [a]p_x^*\overset{\eqref{eq:sur2}}=\\
      ={}&\omega_f\ p_z^*[d]^\vee\ [\cq]p_\yq^*[\bq]^\vee\ [a]p_x^*\overset{\eqref{eq:defrr}}=\\
      ={}&\omega_f\ p_z^*[d]^\vee\ (\yq)
      \ [a]p_x^*\overset{\eqref{eq:rrss}}=\\
      ={}&\omega_f\ p_z^*[d]^\vee\left( \sum_{t\in R_\qO(u,v)\atop
          t\le w}
        \mu_\qO(t,w)\ {}_v\{t\}_u' \right) [a]p_x^*\overset{\eqref{eq:varr}}=\\
      ={}&\omega_f\sum_{t\in R_\qO(u,v)\atop t\le w}\mu_\qO(t,w)\
      p_z^*[d]^\vee\ \
      p_v^*[\dq]^\vee[\aq]p_u^*\ \ [a]p_x^*\overset{\eqref{eq:sur1}}=\\
      ={}&\omega_f\sum_{t\in R_\qO(u,v)\atop t\le w}\mu_\qO(t,w)\
      p_z^* [\dq
      d]^\vee[\aq a]p_x^*\overset{\eqref{eq:varr}}=\\
      ={}&\omega_f\sum_{t\in R_\qO(u,v)\atop t\le w}\mu_\qO(t,w)\
      {}_z\{t\}_x'\qedhere
    \end{split}
  \]
\end{proof}


\begin{bibdiv}
  \begin{biblist}

\bib{Borceux}{book}{
   author={Borceux, Francis},
   title={Handbook of categorical algebra. 2},
   series={Encyclopedia of Mathematics and its Applications},
   volume={51},
   note={Categories and structures},
   publisher={Cambridge University Press, Cambridge},
   date={1994},
   pages={xviii+443},
 }

 \bib{Borceux2}{book}{
   author={Borceux, Francis},
   author={Bourn, Dominique},
   title={Mal'cev, protomodular, homological and semi-abelian categories},
   series={Mathematics and its Applications},
   volume={566},
   publisher={Kluwer Academic Publishers, Dordrecht},
   date={2004},
   pages={xiv+479},
 }
 
 \bib{CKP}{article}{
  author={Carboni, A.},
  author={Kelly, G. M.},
  author={Pedicchio, M. C.},
  title={Some remarks on Mal\cprime tsev and Goursat categories},
  journal={Appl. Categ. Structures},
  volume={1},
  date={1993},
  number={4},
  pages={385--421},
  issn={0927-2852},
  review={\MR {1268510}},
  doi={10.1007/BF00872942},
}

\bib{CO}{article}{
   author={Comes, Jonathan},
   author={Ostrik, Victor},
   title={On blocks of Deligne's category $\underline{\rm Re}{\rm p}(S_t)$},
   journal={Adv. Math.},
   volume={226},
   date={2011},
   number={2},
   pages={1331--1377},
   arxiv={0910.5695},
}

\bib{Deligne}{article}{
  author={Deligne, Pierre},
  title={La catégorie des représentations du groupe symétrique $S_t$, lorsque $t$ n'est pas un entier naturel},
  language={French, with English and French summaries},
  conference={ title={Algebraic groups and homogeneous spaces}, },
  book={ series={Tata Inst. Fund. Res. Stud. Math.}, volume={19}, publisher={Tata Inst. Fund. Res., Mumbai}, },
  date={2007},
  pages={209--273},
  review={\MR {2348906}},
}

\bib{TERC}{article}{
  author={Knop, Friedrich},
  title={Tensor envelopes of regular categories},
  journal={Adv. Math.},
  volume={214},
  date={2007},
  number={2},
  pages={571--617},
  issn={0001-8708},
  review={\MR {2349713}},
  doi={10.1016/j.aim.2007.03.001},
  arxiv={math/0610552},
}

\bib{Stanley}{book}{
  author={Stanley, Richard P.},
  title={Enumerative combinatorics. Volume 1},
  series={Cambridge Studies in Advanced Mathematics},
  volume={49},
  edition={2},
  publisher={Cambridge University Press, Cambridge},
  date={2012},
  pages={xiv+626},
  isbn={978-1-107-60262-5},
  review={\MR {2868112}},
}

  \end{biblist}
\end{bibdiv}

\end{document}